\newcommand{\Z}{{\mathbb Z}}
\newcommand{\F}{\mathbb{F}}
\newcommand{\Fps}{\mathbb{F}_{\!p^2}}
\newcommand{\R}{\mathbb{R}}
\newcommand{\N}{\mathbb{N}}
\newcommand{\C}{\mathcal{C}}
\newcommand{\LL}{\mathcal{L}}
\newcommand{\A}{\mathcal{A}}
\newcommand{\f}{\mathcal{F}}
\newcommand{\Gal}{\mathrm{Gal}}
\newcommand{\Aut}{\mathrm{Aut}}
\newcommand{\Ht}{\mathrm{Ht}}
\newcommand{\id}{\mathrm{id}}
\newcommand{\rank}{\mathrm{rank}}
\newcommand{\sigmabold}{\boldsymbol{\sigma}}
\newcommand{\taubold}{\boldsymbol{\tau}}
\newcommand{\gammabold}{\boldsymbol{\gamma}}
\newcommand{\phibold}{\boldsymbol{\phi}}
\newcommand{\Gbold}{\boldsymbol{G}}
\newcommand{\Hbold}{\boldsymbol{H}}
\renewcommand{\char}{\mbox{char}}
\newcommand{\sigmabar}{\overline{\boldsymbol{\sigma}}}
\newcommand{\phitil}{\widetilde{\boldsymbol{\phi}}}
\newcommand{\ra}{\rightarrow}
\newcommand{\dst}{\displaystyle}
\newcommand{\tst}{\textstyle}
\newenvironment{enum1}{
\begin{enumerate}[(1)]
\setlength{\itemsep}{.1cm}
\setlength{\parskip}{0cm}
\setlength{\parsep}{0cm}
}{\end{enumerate}}
\newenvironment{enuma}{
\begin{enumerate}[(a)]
\setlength{\itemsep}{.2cm}
\setlength{\parskip}{0cm}
\setlength{\parsep}{0cm}
}{\end{enumerate}}
\newtheorem{theorem}{Theorem}
\newtheorem{lemma}[theorem]{Lemma}
\newtheorem{prop}[theorem]{Proposition}
\newtheorem{cor}[theorem]{Corollary}
\theoremstyle{definition}
\newtheorem{remark}[theorem]{Remark}
\newtheorem{example}[theorem]{Example}
\numberwithin{equation}{section}
\numberwithin{theorem}{section}
\title{On the numerology of ramification data for power
series in characteristic $p$}
\author{Kevin Keating \\
Department of Mathematics \\
University of Florida \\
Gainesville, FL 32611 \\
USA \\[.2cm]
{\tt keating@ufl.edu}}
\begin{document}

\maketitle

\begin{abstract}
\noindent
Let $k$ be a perfect field of characteristic $p$ and set
$K=k((t))$.  In this paper we study the ramification
properties of elements of $\Aut_k(K)$.  By choosing a
uniformizer for $K$ we may interpret our theorems in
terms of power series over $k$.  The most important tool
that we use is the field of norms construction of
Fontaine and Wintenberger.
\end{abstract}

\section{Introduction}

Let $k$ be a perfect field of characteristic $p$ and let
$K$ be a local field of characteristic $p$ with residue
field $k$.  Let $\Aut_k(K)$ denote the group of
continuous $k$-automorphisms of $K$ and let
$\sigma\in\Aut_k(K)$.  Let $v_K$ denote the normalized
valuation of $K$ and let $\pi_K$ be a uniformizer for
$K$.  The (lower) ramification number of $\sigma$ is
defined to be $i(\sigma)=v_K(\sigma(\pi_K)-\pi_K)-1$;
this value does not depend on the choice of $\pi_K$.  We
say that $\sigma$ is a wild automorphism of $K$ if
$i(\sigma)\ge1$.  In this paper we study the
ramification numbers of the $p^n$-powers of wild
automorphisms of $K$.  Our most important tool is the
field of norms construction of Fontaine and
Wintenberger, which allows us to interpret $p$-adic Lie
subgroups of $\Aut_k(K)$ in terms of totally ramified
$p$-adic Lie extensions of local fields.

     Automorphisms of local fields of characteristic $p$
are closely connected to power series over the residue
field $k$.  Let $k((t))$ denote the field of formal
Laurent series in one variable over $k$.  There is a
continuous $k$-isomorphism from $k((t))$ to $K$ which
carries $t$ to $\pi_K$; in particular, there is
$\eta_{\sigma}\in k[[t]]$ such that $\sigma(\pi_K)
=\eta_{\sigma}(\pi_K)$.  The set of power series
\[\A(k)=\{a_0t+a_1t^2+a_2t^3+\cdots:
a_i\in k,\,a_0\not=0\}\]
forms a group with the operation
$\phi(t)\cdot\psi(t)=\phi(\psi(t))$, and the map
$\theta:\Aut_k(K)\ra\A(k)$ defined by
$\theta(\sigma)=\eta_{\sigma^{-1}}(t)$ is a group
isomorphism.  The results of this paper are phrased in
terms of elements and subgroups of $\Aut_k(K)$, but they
can be interpreted as statements about $\A(k)$.  We work
with $\Aut_k(K)$ rather than $\A(k)$ because the functor
$\f$ described in Section~\ref{norms} maps Galois groups
to subgroups of $\Aut_k(K)$.

\section{The field of norms} \label{norms}

     Let $k$ be a perfect field of characteristic $p$.
We define a category $\LL$ whose objects are totally
ramified Galois extensions $E/F$, where $F$ is a local
field with residue field $k$, and $\Gal(E/F)$ is a
$p$-adic Lie group of dimension $d\ge1$.  An
$\LL$-morphism from $E/F$ to $E'/F'$ is defined to be a
continuous embedding $\rho:E\ra E'$ such that
\begin{enum1}
\item $\rho$ induces the identity on $k$.
\item $E'$ is a finite separable extension of $\rho(E)$.
\item $F'$ is a finite separable extension of $\rho(F)$.
\end{enum1}
Let $\rho^*:\Gal(E'/F')\ra\Gal(E/F)$ be the homomorphism
induced by $\rho$.  It follows from conditions (2) and
(3) in the definition that $\rho^*$ has finite kernel
and finite cokernel.

     Let $K$ be a local field with residue field $k$ and
let $\Aut_k(K)$ denote the group of continuous
automorphisms of $K$ which induce the identity map on
$k$.  Define a metric on $\Aut_k(K)$ by setting
$d(\sigma,\tau)=2^{-a}$, where
$a=v_K(\sigma(\pi_K)-\tau(\pi_K))$ and $\pi_K$ is a
uniformizer for $K$.  We define a category $\C$ whose
objects are pairs $(K,G)$, where $K$ is a local field of
characteristic $p$ with residue field $k$, and $G$ is a
closed subgroup of $\Aut_k(K)$ which is a compact
$p$-adic Lie group of dimension $d\ge1$.  A
$\C$-morphism from $(K,G)$ to $(K',G')$ is defined to be
a continuous field embedding $\gamma:K\ra K'$ such that
\begin{enum1}
\item $\gamma$ induces the identity on $k$.
\item $K'$ is a finite separable extension of
$\gamma(K)$.
\item $G'$ stabilizes $\gamma(K)$, and the image of $G'$
in $\Aut_k(\gamma(K))\cong\Aut_k(K)$ is an open subgroup
of $G$.
\end{enum1}
Let $\gamma^*:G'\ra G$ be the map induced by $\gamma$.
It follows from conditions (2) and (3) in the definition
that $\gamma^*$ has finite kernel and finite cokernel.

     Let $E/F$ be a totally ramified $p$-adic Lie
extension.  Then the field of norms of $E/F$ is defined
\cite[Th.\,1.2]{wlie}.  The field of norms $X_F(E)$ is a
local field of characteristic $p$ with residue field
$k$, and there is a faithful continuous $k$-linear
action of $\Gal(E/F)$ on $X_F(E)$.  It follows from the
properties of the field of norms construction that there
is a functor $\f:\LL\ra\C$ defined by
\[\f(E/F)=(X_F(E),\Gal(E/F)).\]
Let $\LL^{ab}$ denote the full subcategory of $\LL$
consisting of extensions $E/F\in\LL$ such that
$\Gal(E/F)$ is abelian, and let $\C^{ab}$ denote the
full subcategory of $\C$ consisting of pairs $(K,G)$
such that $G$ is abelian.  Wintenberger proved the
following:

\begin{theorem} \label{equiv}
$\f$ induces an equivalence of categories from
$\LL^{ab}$ to $\C^{ab}$.
\end{theorem}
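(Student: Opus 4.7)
The plan is to verify the two defining properties of an equivalence of categories: essential surjectivity and full faithfulness of $\f$ on $\LL^{ab}$. Faithfulness is the easiest part. If $\rho_1,\rho_2:E\ra E'$ are two $\LL^{ab}$-morphisms with $\f(\rho_1)=\f(\rho_2)$, then the induced embeddings $X_F(E)\ra X_{F'}(E')$ coincide; since an element of $E$ is encoded inside its field of norms as a norm-compatible sequence drawn from the tower of finite subextensions cut out by the open subgroups of $\Gal(E/F)$, and since $\rho_i$ must respect these towers, the equality of the maps on $X_F(E)$ forces $\rho_1=\rho_2$ on each finite layer and hence on $E$.

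For essential surjectivity, given $(K,G)\in\C^{ab}$, I would build an $E/F\in\LL^{ab}$ together with a $k$-isomorphism $X_F(E)\cong K$ under which $\Gal(E/F)\cong G$. Reduce first to the case $\dim G=1$: by the structure theorem for abelian pro-$p$ $p$-adic Lie groups, $G$ is (up to finite error) a product of $\Z_p$'s, and once the procyclic case is handled one can iterate by taking the fixed field of a codimension-one subgroup and working inductively inside the fixed field / quotient pair. In the procyclic case one has a wild automorphism $\sigma$ of $K$ generating $G$ topologically; the ramification filtration of $\langle\sigma\rangle$ picks out a well-behaved descending chain of subfields of $K$, and Wintenberger's lifting principle (built on Lubin--Tate / Coleman power series technology) associates to this chain a characteristic-zero tower $E=\bigcup E_n$ over a suitable $F$, where $E_n/F$ is the finite level whose field of norms recovers the corresponding layer inside $K$. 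The norm-compatibility intrinsic to the field of norms functor guarantees $X_F(E)\cong K$, and by construction the Galois action transports to the original $G$-action.

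For fullness, once the inverse construction is available, any $\C^{ab}$-morphism $\gamma:(K,G)\ra(K',G')$ must be lifted to an $\LL^{ab}$-morphism $\rho:E\ra E'$ between the extensions produced above. Because $\gamma$ is compatible with the $G$- and $G'$-actions through $\gamma^*$, it respects the ramification filtrations and hence the chains of subfields used to construct the towers $(E_n)$ and $(E'_n)$. One then lifts $\gamma$ one finite layer at a time, using that in the abelian case the lift of each layer is determined by local class field theory applied to the base field, and takes the limit $\rho=\varprojlim\rho_n$. The finiteness of the kernel and cokernel of $\gamma^*$ translates into the corresponding property for $\rho^*$, so $\rho$ is indeed an $\LL^{ab}$-morphism.

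The main obstacle is the essential surjectivity step in the procyclic case: producing the characteristic-zero tower whose field of norms is $K$ requires a genuinely nontrivial construction, and it is exactly here that the abelian hypothesis is indispensable, since lifting the tower layer by layer relies on abelian local class field theory (or equivalently on the Lubin--Tate-style formal-group machinery Wintenberger employs). Once the procyclic case is available, the reduction from general abelian $G$ and the verification of functoriality are comparatively formal.
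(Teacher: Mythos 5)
There is a genuine gap. The decisive content of this theorem is essential surjectivity (together with fullness), and your sketch does not prove it: at the crucial moment you appeal to ``Wintenberger's lifting principle'' to produce, from a wild automorphism $\sigma$ generating $G$, an extension $E/F$ with $X_F(E)\cong K$ equivariantly. That lifting statement \emph{is} the theorem in the procyclic case, so as written the argument is circular; nothing in the proposal explains how the tower is actually constructed. Note also that the paper itself does not prove this result: its proof is a citation, with \cite{Wab} giving an outline and \cite{WZp} and \cite{Llie} the details, which occupy two substantial papers. Your faithfulness paragraph is the only portion that is close to complete, and even there one must argue how a morphism $\rho:E\ra E'$ is recovered from its effect on norm-compatible sequences, since $E$ is not literally contained in $X_F(E)$.

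Beyond the circularity, one concrete assertion is false: the extension realizing a given $(K,G)\in\C^{ab}$ cannot in general be taken of characteristic zero, so Lubin--Tate/Coleman power series are not the mechanism. By Theorem~\ref{htd}, if $\f(E/F)\cong(K,G)$ with $\char(F)=0$ and $G\cong\Z_p^d$, then every nontrivial $\sigma\in G$ has $\lim_{n\ra\infty}i_n(\sigma)/p^{dn}$ existing and nonzero; but Proposition~\ref{breaks} produces, for instance, a $\Z_p$-extension of $k((t))$ with upper breaks $p^n$, hence lower breaks $(p^{2n+1}+1)/(p+1)$ (the case $h=2$ of Proposition~\ref{exists}), and the resulting pair $(K,G)$ with $G\cong\Z_p$ violates that growth condition, so it can only arise from a base field of characteristic $p$. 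The actual proofs of Wintenberger and Laubie construct characteristic-$p$ base fields using arithmetically profinite extensions and characteristic-$p$ class field theory, not a characteristic-zero tower. Finally, the reduction from general abelian $G$ (dimension $d\ge2$, possibly with torsion) to the procyclic case is not the formal induction you describe; realizing higher-dimensional abelian groups with prescribed equivariant field of norms is the main content of \cite{Llie}. The proposal is a reasonable roadmap of what must be verified, but it does not constitute a proof.
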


\begin{proof}
See \cite{Wab} for an outline of the proof; see
\cite{WZp} and \cite{Llie} for the details.
\end{proof}

     Let $(K,G)\in\C$.  The lower ramification number of
$\sigma\in\Aut_k(K)$ is defined to be
$i(\sigma)=v_K(\sigma(\pi_K)-\pi_K)-1$, where $\pi_K$ is
any uniformizer for $K$.  If $\sigma\not=\id_K$ then
$i(\sigma)$ is a nonnegative integer.  For
$x\in\R_{\ge0}$ we define the $x$th lower ramification
subgroup of $G$ by $G_x=\{\sigma\in G:i(\sigma)\ge x\}$.
We also define the Hasse-Herbrand function
$\phi_G:\R_{\ge0}\ra\R_{\ge0}$ by
\[\phi_G(x)=\int_0^x\frac{dt}{|G:G_t|}.\]
Since $G$ is compact the open subgroup $G_x$ of $G$ has
finite index.  Hence $|G:G_x|<\infty$ for all $x\ge0$,
so $\phi_G$ is one-to-one.

     For $\sigma\in G$ we define the upper ramification
number of $\sigma$ by $u^G(\sigma)=\phi_G(i(\sigma))$.
Note that while the lower ramification number
$i(\sigma)$ depends only on $\sigma$, the upper
ramification number $u^G(\sigma)$ depends on $G$ as
well.  If $\sigma\not=\id_K$ then $u^G(\sigma)$ is a
nonnegative rational number, but not necessarily an
integer.  For $x\ge0$ we define the $x$th upper
ramification subgroup of $G$ to be
$G^x=\{\sigma\in G:u^G(\sigma)\ge x\}$.  Suppose
$\dst\lim_{x\ra\infty}\phi_G(x)=\infty$.  Then $\phi_G$
is a bijection, so we may define
$\psi_G:\R_{\ge0}\ra\R_{\ge0}$ by
$\psi_G(x)=\phi_G^{-1}(x)$.  We have then
\[\psi_G(x)=\int_0^x|G:G^t|\,dt.\]

     We say that $\sigma$ is a wild automorphism of $K$
if $i(\sigma)\ge1$.  In this case we define
$i_n(\sigma)=i(\sigma^{p^n})$ and $u_n^G(\sigma)
=u^G(\sigma^{p^n})$ for $n\ge0$.  Then
$(i_n(\sigma))_{n\ge0}$ and $(u_n(\sigma))_{n\ge0}$ are
increasing sequences.  Let $H$ be the closure of the
subgroup of $\Aut_k(K)$ generated by $\sigma$.  Then
$\sigma$ is a wild automorphism of $K$ if and only if
either $H$ is a cyclic $p$-group or $H\cong\Z_p$.  Hence
if $G\le\Aut_k(K)$ is a pro-$p$ group then every
$\sigma\in G$ is a wild automorphism of $K$.

     Suppose $E/F\in\LL$.  Then $\Gal(E/F)$ has a
filtration by upper ramification groups $\Gal(E/F)^x$
for $x\ge0$ (see \cite[IV]{cl} or \cite[III\,\S3]{FV}).
Since $E/F$ is a totally ramified $p$-adic Lie
extension, $E/F$ is arithmetically profinite
\cite[Th.\,1.2]{wlie}.  In other words, for every
$x\ge0$ the upper ramification group $\Gal(E/F)^x$ has
finite index in $\Gal(E/F)$.  Therefore we may define
Hasse-Herbrand functions
\[\psi_{E/F}(x)=\int_0^x|\Gal(E/F):\Gal(E/F)^t|\,dt\]
and $\phi_{E/F}(x)=\psi_{E/F}^{-1}(x)$.  We define the
lower numbering for the ramification subgroups of
$\Gal(E/F)$ by setting
$\Gal(E/F)_x=\Gal(E/F)^{\phi_{E/F}(x)}$.  The crucial
fact for our purposes is that the functor $\f$ respects
the ramification filtrations:

\begin{theorem} \label{ram}
Let $E/F\in\LL$ and suppose $\f(E/F)\cong(K,G)$.  Then
$\phi_G$ is onto, so $\psi_G=\phi_G^{-1}$ is defined.
Furthermore, for all $x\ge0$ the following hold:
\begin{enuma}
\item The isomorphism $\Gal(E/F)\cong G$ induces
isomorphisms $\Gal(E/F)_x\cong G_x$ and
$\Gal(E/F)^x\cong G^x$.
\item $\phi_{E/F}(x)=\phi_G(x)$ and
$\psi_{E/F}(x)=\psi_G(x)$.
\end{enuma}
\end{theorem}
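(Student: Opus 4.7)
The plan is to reduce to the compatibility of the field of norms with finite Galois sub-extensions---Wintenberger's theorem at the finite level---and then to pass to the inverse limit via Herbrand's theorem for upper ramification.

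First I would establish the upper-numbering isomorphism $\Gal(E/F)^x\cong G^x$ for every $x\ge0$. Since $E/F$ is a $p$-adic Lie extension, $\Gal(E/F)=\varprojlim_H\Gal(E/F)/H$ with $H$ running over open normal subgroups of $\Gal(E/F)$. Each such $H$ cuts out a finite Galois sub-extension $F_H=E^H$ of $E/F$, and Wintenberger's construction at the finite level produces a finite Galois extension $K_H/K$ of local fields of characteristic $p$ whose Galois group is $\Gal(F_H/F)$ and whose ramification filtration matches that of $F_H/F$ in both lower and upper numbering. On the $\C$-side these finite extensions are those cut out by the open subgroups of $G$ acting on $K=X_F(E)$, so by Herbrand's theorem on the compatibility of upper ramification with quotients the upper ramification groups of the two sides agree at each finite level; taking the inverse limit over $H$ yields $G^x\cong\Gal(E/F)^x$.

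Next I would derive the Hasse-Herbrand identities. Arithmetic profiniteness of $E/F$ ensures $|\Gal(E/F):\Gal(E/F)^t|<\infty$ for all $t\ge0$, so by the previous step the same holds for $|G:G^t|$. Hence $\psi_G(x)=\int_0^x|G:G^t|\,dt$ is finite for all $x$ and equals $\psi_{E/F}(x)$; its inverse $\phi_G=\phi_{E/F}$ is therefore defined on all of $\R_{\ge0}$ and onto, proving the first claim of the theorem. The identification of lower ramification subgroups is then formal: on the $\LL$-side one has $\Gal(E/F)_x=\Gal(E/F)^{\phi_{E/F}(x)}$ by definition, while on the $\C$-side the definitions $u^G(\sigma)=\phi_G(i(\sigma))$ and $G^x=\{\sigma:u^G(\sigma)\ge x\}$ force $G_x=G^{\phi_G(x)}$, so that $\Gal(E/F)_x\cong G_x$ follows from the preceding identifications.

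The main obstacle is the finite-level compatibility invoked in the first step, which is the substantive content of Wintenberger's field of norms theorem and is not a formal consequence of the definitions; the remaining manipulations are essentially bookkeeping using Herbrand's theorem and arithmetic profiniteness.
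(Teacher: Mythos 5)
Your outline has a genuine gap at its core, and the surrounding reduction rests on a misreading of how the field of norms functor operates. The false step is the claim that ``Wintenberger's construction at the finite level produces a finite Galois extension $K_H/K$ whose Galois group is $\Gal(F_H/F)$ with matching ramification.'' The field of norms attaches nothing to a finite subextension $F_H=E^H$ of $E/F$: the construction needs an infinite arithmetically profinite extension, and its functoriality runs in the opposite direction. Finite separable extensions $M/E$ \emph{above} $E$ correspond to finite extensions $X_F(M)/X_F(E)$, while a closed subgroup $\Hbold\le\Gal(E/F)$ of infinite index yields a subfield $X_F(E^{\Hbold})$ of which $K$ is an extension (this is \cite[Prop.\,3.4.1]{cn}, used in Corollary~\ref{any}). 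There is no finite extension of $K$ ``cut out by the open subgroups of $G$'': an open subgroup $H\le G$ still has dimension $d\ge1$, its fixed field in $K$ is not of finite index, and the quotient $G/H$ does not act on any field supplied by the functor. Consequently your inverse-limit step has nothing to converge to on the $\C$-side: $G^x$ is defined intrinsically from the displacements $i(\sigma)=v_K(\sigma(\pi_K)-\pi_K)-1$ and the function $\phi_G$, not as an inverse limit of finite-level ramification groups, so Herbrand's theorem on quotients never gets a grip on it.

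Beyond the misdirected reduction, the substantive content is exactly what you defer: the identity $i(\sigmabold)=\psi_{E/F}(u(\sigmabold))$, comparing the ramification number of $\sigmabold$ acting on norm-compatible sequences in $X_F(E)$ with the ramification data of the tower $E/F$. Proving it requires estimating $v_L(\sigmabold(\pi_L)-\pi_L)$ along the finite levels $L$ of the tower and controlling the norm maps; this is the heart of Wintenberger's argument and is not a formal consequence of Herbrand's theorem plus arithmetic profiniteness. The paper does not reprove it either --- its proof of Theorem~\ref{ram} is simply the citation \cite[Cor.\,3.3.4]{cn} --- so if you intend a self-contained argument you must supply this comparison. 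Your final bookkeeping (once (a) is granted, $|G:G^t|<\infty$, surjectivity of $\phi_G$, and $\psi_G=\psi_{E/F}$, hence the statement about lower numbering) is essentially fine, but it rests entirely on the part that is missing.
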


\begin{proof} See \cite[Cor.\,3.3.4]{cn}. \end{proof}

     Say that $a\ge0$ is an upper ramification break for
$E/F$ if $\Gal(E/F)^a\not=\Gal(E/F)^{a+\epsilon}$ for
all $\epsilon>0$.  Say that $b\ge0$ is a lower
ramification break for $E/F$ if
$\Gal(E/F)_b\not=\Gal(E/F)_{b+\epsilon}$ for all
$\epsilon>0$.  Let $E/F\in\LL$ and let $(K,G)=\f(E/F)$.
Let $\sigmabold\in\Gal(E/F)$ and let $\sigma$ be the
automorphism of $X_F(E)$ induced by $\sigmabold$.  We
define the upper and lower ramification numbers of
$\sigmabold$ by $u(\sigmabold)=u^G(\sigma)$ and
$i(\sigmabold)=i(\sigma)$.  If $\sigmabold\in
\Gal(E/F)_1$ then for $n\ge0$ we set
$u_n(\sigmabold)=u(\sigmabold^{p^n})$ and
$i_n(\sigmabold)=i(\sigmabold^{p^n})$.  It follows from
Theorem~\ref{ram} that
$i_n(\sigmabold)=\psi_{E/F}(u_n(\sigmabold))$.  Suppose
$\sigmabold,\taubold\in\Gal(E/F)$.  Since
$i(\sigma\tau)\ge\min\{i(\sigma),i(\tau)\}$ we get
$i(\sigmabold\taubold)\ge\min\{i(\sigmabold),i(\taubold)\}$.

\section{$p$-adic Lie subgroups of $\Aut_k(K)$}

     Suppose we have an isomorphism $\f(E/F)\cong(K,G)$
in the category $\C$.  We wish to compute the absolute
ramification index $e_F$ of $F$ using the ramification
data of $G$.  In Section~4 of \cite{liht} $e_F$ is
computed in the cases where $G\cong\Z_p$ or
$G\cong\Z_p\times\Z_p$.  In \cite[Th.\,1.2]{wlie} it is
proved that $e_F=\infty$ if and only if
$\dst\lim_{x\ra\infty} \frac{|G:G_x|}{x}=0$.  The
following theorem gives a general method for computing
$e_F$ in terms of the ramification data of $G$.

\begin{theorem}
Let $K$ be a local field of characteristic $p$ with
residue field $k$ and let $G$ be a $p$-adic Lie subgroup
of $\Aut_k(K)$ of dimension $d\ge1$.  Assume there is
$E/F\in\LL$ such that $\f(E/F)\cong(K,G)$.  Set
$e_F=v_F(p)$, so that $e_F=\infty$ if $\char(F)=p$.
Then
\[\lim_{x\ra\infty}\frac{\log_p(|G:G^x|)}{x}
=\frac{d}{e_F}.\]
\end{theorem}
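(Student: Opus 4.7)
The plan is to transfer the question to the Galois side using Theorem \ref{ram} and then split into the two cases $\char(F)=0$ and $\char(F)=p$. By Theorem \ref{ram}(a) the isomorphism $\f(E/F)\cong(K,G)$ identifies upper ramification subgroups, so $|G:G^x|=|\Gal(E/F):\Gal(E/F)^x|$ for every $x\ge0$, and it suffices to compute the limit with $\Gal(E/F)$ in place of $G$.

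Consider first the case $\char(F)=0$, so $F$ is a finite extension of $\Q_p$ with $e_F<\infty$. Here I would invoke Sen's theorem on the upper ramification filtration of totally ramified $p$-adic Lie extensions of a finite extension of $\Q_p$: in the present setting this asserts the existence of $x_0\ge0$ such that
\[|\Gal(E/F)^x:\Gal(E/F)^{x+e_F}|=p^d\]
for every $x\ge x_0$. Iterating this identity along the arithmetic progression $x_0,x_0+e_F,x_0+2e_F,\dots$ gives $|\Gal(E/F)^{x_0}:\Gal(E/F)^{x_0+ne_F}|=p^{nd}$, and interpolating between consecutive terms using monotonicity of the index in $x$ yields $\log_p|\Gal(E/F):\Gal(E/F)^x|=(d/e_F)\,x+O(1)$, from which the limit $d/e_F$ follows.

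Now consider the case $\char(F)=p$, so $e_F=\infty$ and the target limit is $0$. By \cite[Th.\,1.2]{wlie}, this hypothesis is equivalent to $|G:G_y|/y\to0$ as $y\to\infty$. Write $h(x)=|G:G^x|$ and $f(y)=|G:G_y|$. Since $G^x=G_{\psi_G(x)}$ we have $h(x)=f(\psi_G(x))$, and $\psi_G'(x)=h(x)$ wherever the derivative is defined. Since $\phi_G$ is onto (by the first sentence of Theorem \ref{ram}), $\psi_G(x)\to\infty$, and therefore
\[(\log\psi_G)'(x)=\frac{h(x)}{\psi_G(x)}=\frac{f(\psi_G(x))}{\psi_G(x)}\longrightarrow 0\qquad(x\to\infty).\]
Integrating from a large base point gives $\log\psi_G(x)=o(x)$; since $h(x)\le\psi_G(x)$ for $x$ sufficiently large (the ratio tending to $0$), we conclude $\log_p h(x)/x\to0$, matching $d/e_F=0$.

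The main obstacle is Case 1: justifying $|\Gal(E/F)^x:\Gal(E/F)^{x+e_F}|=p^d$ for large $x$. This is the generalization of Sen's original theorem (which handles $\Z_p$-extensions) to arbitrary totally ramified $p$-adic Lie extensions of a finite extension of $\Q_p$. The underlying mechanism is that on the pro-$p$ part of $\Gal(E/F)$ the $p$-th power map shifts the upper numbering by $e_F$; combined with an induction on the Lie dimension (or equivalently a filtration argument on the Lie algebra of $\Gal(E/F)$), this yields the stated step of $p^d$. Apart from invoking this deep input, the remaining work in both cases is routine analytic bookkeeping with the Hasse-Herbrand function.
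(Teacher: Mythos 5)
Your characteristic~$p$ case is correct and is essentially the paper's own argument: both rewrite $|G:G^x|=|G:G_{\psi_G(x)}|$, use Wintenberger's criterion from \cite{wlie} that $|G:G_y|/y\to0$, and conclude $\log_p|G:G^x|\le\log_p\psi_G(x)=o(x)$. The reduction to the Galois side via Theorem~\ref{ram} is also fine.

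The gap is in the mixed-characteristic case, and it is exactly the step you flag as the ``main obstacle.'' The statement you attribute to Sen, namely that $|\Gal(E/F)^x:\Gal(E/F)^{x+e_F}|=p^d$ for all sufficiently large $x$, is not Sen's theorem and is not proved in your sketch. Sen's theorem \cite{sen} already treats arbitrary totally ramified $p$-adic Lie extensions (not just $\Z_p$-extensions), but what it gives is a bounded comparison with a Lie filtration: there is $c>0$ with $G^{ne_F+c}\le G(n)\le G^{ne_F-c}$ for all $n$. That only pins $\log_p|G:G^x|$ to within $O(1)$ of $(d/e_F)x$; it does not force the index to advance by exactly $p^d$ over every window of length $e_F$, so your exact-step claim is strictly stronger than the cited input. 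Your proposed mechanism --- ``the $p$-th power map shifts the upper numbering by $e_F$'' plus induction on the Lie dimension --- concerns lower ramification numbers of individual elements and does not transfer to the upper filtration without controlling the Hasse--Herbrand distortion; to make it work you would need, for large $x$, both inclusions in $(G^x)^p=G^{x+e_F}$ and the equality $|G^x:(G^x)^p|=p^d$ (i.e.\ that $G^x$ is eventually torsion-free and uniform), none of which is supplied. (The exact-step statement is provable for abelian $G$ via class field theory, but that is an argument, not a citation, and it is unnecessary.) The repair, which is the paper's route, is to use Sen's comparison as actually stated together with Lazard's theorem \cite{laz} producing open normal subgroups $G(n)$ with $G(n)/G(n+1)$ elementary abelian of rank $d$, hence $|G:G(n)|=Ap^{dn}$; sandwiching $G^x$ between suitable $G(a)$ and $G(b)$ then gives $\log_p|G:G^x|=(d/e_F)x+O(1)$, and your interpolation step finishes as written.
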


\begin{proof} Suppose $\char(F)=p$.  Set
$\dst\lambda(x)=\frac{|G:G^x|}{\psi_G(x)}=
\frac{\psi_G'(x)}{\psi_G(x)}$, where $\psi_G'(x)$
denotes the left derivative of $\psi_G(x)$.  Then
$\Lambda(x):=\log\psi_G(x)$ is an antiderivative of
$\lambda(x)$, and $\psi_G(x)=e^{\Lambda(x)}$.  It
follows that
\begin{align} \nonumber
|G:G^x|&=\psi_G'(x)=e^{\Lambda(x)}\cdot\lambda(x) \\
\frac{\log_p(|G:G^x|)}{x}
&=\log_pe\cdot\frac{\Lambda(x)+\log\lambda(x)}{x}.
\label{nonneg}
\end{align}
By \cite[Th.\,1.2]{wlie} we have
$\dst\lim_{x\ra\infty}\lambda(x)=0$.  It follows that
$\log\lambda(x)<0$ for sufficiently large $x$.  Using
(\ref{nonneg}) we see that for sufficiently large $x$ we
have
\[0\le\frac{\log_p(|G:G^x|)}{x}
\le\log_pe\cdot\frac{\Lambda(x)}{x}.\]
Since $\Lambda'(x)=\lambda(x)$ goes to 0 as
$x\ra\infty$, we have
$\dst\lim_{x\ra\infty}\frac{\Lambda(x)}{x}=0$.  
Therefore
\[\lim_{x\ra\infty}\frac{\log_p(|G:G^x|)}{x}=0
=\frac{d}{\infty}.\]

     Now suppose $\char(F)=0$.  By
\cite[III,\,Prop.\,3.1.3]{laz} there exists a sequence
$G\ge G(0)\ge G(1)\ge\dots$ of open normal subgroups of
the $p$-adic Lie group $G$ such that $G(n)/G(n+1)$ is an
elementary abelian $p$-group of rank $d$ for every
$n\ge0$.  Since $G$ is compact, $G(0)$ has finite index
in $G$.  Hence by setting $A=|G:G(0)|$ we get
$|G:G(n)|=Ap^{dn}$ for all $n\ge 0$.  By Sen's theorem
\cite{sen} and Theorem~\ref{ram} there is $c>0$ such
that $G^{ne_F+c}\le G(n)\le G^{ne_F-c}$ for all $n\ge0$,
where we define $G^x=G$ for $x<0$.  It follows that for
$x\ge c$ we have $G(a)\le G^x\le G(b)$ with
$\dst a=\left\lceil\frac{x+c}{e_F}\right\rceil$ and
$\dst b=\left\lfloor\frac{x-c}{e_F}\right\rfloor$.  Hence
\[\log_pA+d\left\lceil\frac{x+c}{e_F}\right\rceil
\ge\log_p(|G:G^x|)\ge
\log_pA+d\left\lfloor\frac{x-c}{e_F}\right\rfloor.\]
Dividing these inequalities by $x$ and taking the limit
as $x\ra\infty$ gives the theorem.
\end{proof}

     For the rest of this section we restrict our
attention to the cases where $G\cong\Z_p^d$ for some
$d\ge1$.  For groups $G$ of this form we set
$G(n)=\{\sigma^{p^n}:\sigma\in G\}$ for $n\ge0$.  In
order to prove our next theorem we need some preliminary
results.

\begin{lemma} \label{Gx}
Let $G$ be a subgroup of $\Aut_k(G)$ such that
$G\cong\Z_p^d$ for some $d\ge1$.  Assume that there
exist positive real numbers $C$ and $\lambda$ such that
for every $n\ge0$ and $\sigma\in G\smallsetminus G(1)$
we have $i_n(\sigma)<Cp^{\lambda n}$.  Then for every
$x\ge0$ we have $|G:G_x|>(x/C)^{d/\lambda}$.
\end{lemma}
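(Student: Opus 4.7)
The plan is to exploit the $\Z_p^d$ structure of $G$ to sort nontrivial elements of $G_x$ by their $p$-divisibility. Since $G$ is torsion-free with $\bigcap_{m \ge 0} G(m) = \{\id_K\}$, every nontrivial $\tau \in G$ may be written as $\tau = \sigma^{p^m}$ with $m \ge 0$ maximal and $\sigma \in G \smallsetminus G(1)$. Under this decomposition $i(\tau) = i_m(\sigma)$, so the hypothesis applies directly to $\tau$ through its representative $\sigma$.

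The core step is to fix $x \ge 0$ and consider an arbitrary $\tau \in G_x \smallsetminus \{\id_K\}$, decomposing it as above. Combining the hypothesis $i_m(\sigma) < Cp^{\lambda m}$ with $i(\tau) \ge x$ forces $p^m > (x/C)^{1/\lambda}$. Letting $N$ be the smallest nonnegative integer satisfying $p^N > (x/C)^{1/\lambda}$, this shows every such $\tau$ lies in $G(m) \subseteq G(N)$, i.e., $G_x \subseteq G(N)$.

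From this containment the bound will follow at once: $G \cong \Z_p^d$ makes $G/G(N)$ isomorphic to $(\Z/p^N\Z)^d$, so
\[|G:G_x| \ge |G:G(N)| = p^{Nd} > \bigl((x/C)^{1/\lambda}\bigr)^d = (x/C)^{d/\lambda}.\]

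I do not foresee a real obstacle here; the argument is essentially a pigeonhole using the hypothesis, and the strict inequality in the conclusion is delivered for free by the strict inequality built into the choice of $N$. The one place to be careful is the decomposition $\tau = \sigma^{p^m}$ with $m$ maximal, but this is immediate from the torsion-freeness of $G \cong \Z_p^d$.
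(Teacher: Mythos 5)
Your argument is correct and is essentially the paper's proof: both establish the containment $G_x\subseteq G(N)$ for a suitable $N$ determined by the hypothesis $i_m(\sigma)<Cp^{\lambda m}$, and then use $|G:G(N)|=p^{Nd}$ to get the bound. Your direct per-element estimate of the exponent $m$ merely repackages the paper's contradiction at the thresholds $Cp^{\lambda n}$ and avoids its small-$x$ case split; no gap.
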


\begin{proof}
Let $n\ge0$ and let $\tau\in G_{Cp^{\lambda n}}$.  If
$\tau\not\in G(n+1)$ then $\tau=\sigma^{p^m}$ for
some $m\le n$ and $\sigma\in G\smallsetminus G(1)$.
It follows that $i(\tau)=i_m(\sigma)\le i_n(\sigma)
<Cp^{\lambda n}$.  This is a contradiction, so we have
$\tau\in G(n+1)$.  It follows that
$G_{Cp^{\lambda n}}\le G(n+1)$, and hence that
\[|G:G_{Cp^{\lambda n}}|\ge|G:G(n+1)|=p^{d(n+1)}.\]
If $0\le x<C$ then the conclusion of the lemma certainly
holds.  Suppose $x\ge C$.  Then there is $n\ge0$ such
that $Cp^{\lambda n}\le x<Cp^{\lambda(n+1)}$.  Therefore
\[|G:G_x|\ge|G:G_{Cp^{\lambda n}}|\ge p^{d(n+1)}
>(x/C)^{d/\lambda}.\qedhere\]
\end{proof}

\begin{prop} \label{breaks}
Let $F$ be a local field of characteristic $p$ with
perfect residue field $k$ and let $(a_n)_{n\ge0}$ be a
sequence of positive integers. Then the following
statements are equivalent:
\begin{enum1}
\item There exists a totally ramified $\Z_p$-extension
$E/F$ whose upper ramification sequence is
$(a_n)_{n\ge0}$.
\item $p\nmid a_0$, and for all $n\ge0$ we have
$a_{n+1}\ge pa_n$, with $p\nmid a_{n+1}$ if
$a_{n+1}>pa_n$.
\end{enum1}
\end{prop}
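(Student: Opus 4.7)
The plan is to prove the two directions separately, reducing both to a characterization of upper ramification sequences of finite cyclic totally ramified $p^{n+1}$-extensions of $F$, which is the finite-level data underlying any $\Z_p$-extension.

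\textbf{Direction (1) $\Rightarrow$ (2).} Let $G=\Gal(E/F)\cong\Z_p$ and let $E_n\subset E$ be the unique subextension of degree $p^{n+1}$, so that $\Gal(E_n/F)\cong G/p^{n+1}G$ is cyclic of order $p^{n+1}$. Since the upper numbering is compatible with quotients, the upper ramification breaks of $E_n/F$ are precisely $a_0<a_1<\cdots<a_n$. I would therefore reduce the forward direction to the assertion that the upper breaks of any cyclic totally ramified $p^{n+1}$-extension of a local field of characteristic $p$ satisfy the conditions in~(2). The condition $p\nmid a_0$ follows because $E_0/F$ is Artin--Schreier, $y^p-y=\alpha$, and after reducing $\alpha$ modulo $\wp(F)=\{x^p-x:x\in F\}$ one may assume $v_F(\alpha)=-a_0$ with $p\nmid a_0$. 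The conditions $a_{i+1}\ge pa_i$ and $p\nmid a_{i+1}$ when $a_{i+1}>pa_i$ I would derive via Artin--Schreier--Witt theory: $E_n/F$ corresponds to a Witt vector $(\alpha_0,\ldots,\alpha_n)\in W_{n+1}(F)/\wp(W_{n+1}(F))$, and the upper breaks are computable from the valuations of the reduced entries via explicit formulas of Kato and Brylinski.

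\textbf{Direction (2) $\Rightarrow$ (1).} I would build $E/F$ as a limit of a tower of finite cyclic extensions inductively. Start with $E_0=F(y_0)$ where $y_0^p-y_0=\alpha_0$ and $v_F(\alpha_0)=-a_0$; this is possible since $p\nmid a_0$, and yields the unique upper break $a_0$. Given a tower $F\subset E_0\subset\cdots\subset E_n$ with $E_n/F$ cyclic of degree $p^{n+1}$ and upper breaks $a_0,\ldots,a_n$, realized by a Witt vector $(\alpha_0,\ldots,\alpha_n)$, I would extend to a Witt vector $(\alpha_0,\ldots,\alpha_n,\alpha_{n+1})$ and show that the associated $E_{n+1}$ is cyclic of degree $p^{n+2}$ over $F$ with new upper break $a_{n+1}$. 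Cyclicity is automatic from the Witt-vector formalism, so the work lies in choosing $\alpha_{n+1}\in F$ whose valuation is calibrated so that the upper break contributed by the new Witt entry is exactly $a_{n+1}$. The conditions in~(2) are precisely what permit such a choice: the inequality $a_{n+1}\ge pa_n$ allows the new entry to produce a strictly larger upper break than the previous, and the divisibility condition distinguishes the ``fresh'' case ($p\nmid a_{n+1}$, corresponding to $v_F(\alpha_{n+1})=-a_{n+1}$ prime to~$p$) from the ``inherited'' case $a_{n+1}=pa_n$, where the top break arises not from $\alpha_{n+1}$ but from shifting $\alpha_n$ via the Witt-vector arithmetic, and $\alpha_{n+1}$ can be taken of smaller pole order.

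\textbf{Main obstacle.} The technical heart lies in making precise the dictionary between Witt-vector valuations and ramification breaks, particularly in the borderline case $a_{n+1}=pa_n$ where the top break is ``inherited'' from lower levels. Cleanly handling this requires either invoking the explicit Swan-conductor formulas of Kato for Artin--Schreier--Witt characters, or carrying out a direct analysis using Frobenius and Verschiebung on Witt vectors in characteristic $p$. In the forward direction this is where one proves that $p\mid a_{n+1}$ can occur only in the degenerate case $a_{n+1}=pa_n$; in the backward direction this is where one verifies that the new Witt entry produces exactly the prescribed upper break with no unwanted jump.
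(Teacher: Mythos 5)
You take a genuinely different route from the paper. The paper's proof of Proposition~\ref{breaks} is essentially a citation: it deduces the statement from \cite[Th.~3]{ls}, checking the one hypothesis of that theorem (that the Galois group of the maximal abelian pro-$p$ extension of $F$ is free abelian pro-$p$) via \cite{mar}; no ramification computation is carried out in the paper itself. Your proposal instead reduces to finite levels and runs Artin--Schreier--Witt theory directly: compatibility of the upper numbering with quotients identifies the breaks of $E/F$ with those of the cyclic quotients $E_n/F$, and the Brylinski--Kato conductor formula for a reduced Witt vector $(\alpha_0,\ldots,\alpha_n)$ (each entry integral or with pole order prime to $p$) gives the top upper break as $\max_i\, p^{\,n-i}\max(-v_F(\alpha_i),0)$. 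Both directions then follow: forward, $a_{j+1}=\max\bigl(pa_j,\,-v_F(\alpha_{j+1})\bigr)$ forces $a_{j+1}\ge pa_j$, with $p\nmid a_{j+1}$ whenever the maximum is achieved by the new entry; backward, append $\alpha_{n+1}$ of pole order $a_{n+1}$ when $p\nmid a_{n+1}$ and $a_{n+1}>pa_n$, and take $\alpha_{n+1}=0$ when $a_{n+1}=pa_n$, keeping the vector reduced. What your approach buys is a self-contained, constructive argument valid verbatim for any perfect residue field, and it explains where the numerology in statement (2) comes from; what it costs is that the technical heart --- the exactness of the conductor formula for reduced vectors, which you correctly flag but do not execute --- must either be quoted from Kato/Brylinski (legitimate, and then your outline closes) or proved by hand, whereas the paper discharges everything with one citation. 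Two small points you leave implicit and should state: exact cyclicity of degree $p^{n+2}$ together with $E_n\subset E_{n+1}$ comes from the fact that truncation of Witt vectors induces the reduction map on $H^1(F,\Z/p^m\Z)$, and total ramification of $E_{n+1}/F$ follows because the new top break is positive, so the added degree-$p$ step is wildly, hence totally, ramified over $E_n$.
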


\begin{proof} This follows from \cite[Th.\,3]{ls}.  Note
that the hypothesis in \cite{ls} that the Galois group
of the maximal abelian pro-$p$ extension of $F$ is a
free abelian pro-$p$ group is automatically satisfied
when $\char(F)=p$ (see Theorem~8 and Remark~5 of
\cite{mar}).
\end{proof}

\begin{cor} \label{any}
Let $E/F\in\LL$, with $\char(F)=p$.  Set $(K,G)=\f(E/F)$
and let $\sigma$ be a nontorsion element of $G_1$.  Then
$u_n^G(\sigma)\ge p^n$ for all $n\ge0$.
\end{cor}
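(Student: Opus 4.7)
My strategy is to reduce the claim to Proposition~\ref{breaks} applied to the $\Z_p$-subextension cut out by $\sigma$, and then use the Herbrand composition to translate the bound.

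First I would pass to the Galois side: let $\sigmabold\in\Gal(E/F)$ correspond to $\sigma$ under the isomorphism $\f(E/F)\cong(K,G)$. Since $\sigma$ is nontorsion in $G_1\le G$ and $G$ is a compact $p$-adic Lie group, the closure $\overline{\langle\sigmabold\rangle}$ is isomorphic to $\Z_p$. Setting $M:=E^{\overline{\langle\sigmabold\rangle}}$ produces a totally ramified $\Z_p$-extension $E/M$, and $\char M=\char F=p$ since $F\subseteq M$; moreover, under $\f$ this extension corresponds to $(K,H)$ where $H=\overline{\langle\sigma\rangle}$.

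Next I would apply Proposition~\ref{breaks} to the $\Z_p$-extension $E/M$: its upper ramification sequence $(a_n)_{n\ge0}$ satisfies $p\nmid a_0$, $a_0\ge 1$, and $a_{n+1}\ge pa_n$, so $a_n\ge p^n$ for every $n\ge 0$. By Theorem~\ref{ram} applied to $E/M\in\LL^{ab}$, these $a_n$ are precisely the upper ramification numbers $u^H(\sigma^{p^n})=\phi_{E/M}(i(\sigmabold^{p^n}))$.

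The last step — and the main obstacle — is to pass from $a_n$ to $u_n^G(\sigma)$. By Theorem~\ref{ram} applied to $E/F$, $u_n^G(\sigma)=\phi_{E/F}(i(\sigmabold^{p^n}))$, and I would invoke Herbrand's theorem on the tower $F\subseteq M\subseteq E$ (using the Galois closure of $M$ over $F$ if necessary) to get $\phi_{E/F}=\phi_{M/F}\circ\phi_{E/M}$, whence $u_n^G(\sigma)=\phi_{M/F}(a_n)$. Since $\phi_{M/F}$ has slope at most $1$, the naive comparison gives only $u_n^G(\sigma)\le a_n$; the desired bound $u_n^G(\sigma)\ge p^n$ requires exploiting integrality or divisibility constraints on the upper ramification breaks of $E/F$ — coming from Hasse--Arf in the abelian case, or the analogous rigidity available for $p$-adic Lie extensions in characteristic~$p$ — that force $a_n$ to be divisible enough to compensate for the contraction $\phi_{M/F}$.
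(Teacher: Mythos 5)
Your first two steps coincide with the paper's, but the reduction needs more care than you give it, and the last step---which you yourself flag as the main obstacle---is precisely what is missing. First, when $\overline{\langle\sigmabold\rangle}$ has infinite index in $\Gal(E/F)$, the fixed field $M=E^{\Hbold}$ is an infinite totally ramified extension of $F$, hence not a local field (its value group is not discrete), so $E/M$ is not an object of $\LL$ and the phrase ``under $\f$ this extension corresponds to $(K,H)$'' has no meaning as it stands. The paper fills this in by citing \cite[Prop.\,3.4.1]{cn}: one replaces $E/M$ by $E'/F'$ with $F'=X_F(M)$ and $E'=X_{M/F}(E)$, again a totally ramified $\Z_p$-extension of a characteristic-$p$ local field with $(K,H)\cong\f(E'/F')$. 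Granting that, Proposition~\ref{breaks} and Theorem~\ref{ram} do give $a_n=u_n^H(\sigma)\ge p^n$, as you say.

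Second, and more seriously, your argument stops where the corollary begins. What you obtain is a bound on $u_n^H(\sigma)=\phi_H(i_n(\sigma))$, whereas the statement concerns $u_n^G(\sigma)=\phi_G(i_n(\sigma))$; since $\phi_G\le\phi_H$ pointwise, your own observation $u_n^G(\sigma)=\phi_{M/F}(a_n)\le a_n$ shows the comparison runs the wrong way, and the appeal to ``integrality or divisibility constraints\,\dots\,Hasse--Arf'' is not an argument: Hasse--Arf gives integrality of abelian upper breaks, not a lower bound that undoes the contraction of $\phi_{M/F}$, whose relevant slopes are $|H:H_t|/|G:G_t|$ and tend to $0$ when $|G:H|=\infty$. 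So as written your proposal proves only the $H$-statement. For comparison, the paper's own proof consists of the reduction just described together with the citations of Proposition~\ref{breaks} and Theorem~\ref{ram}; it does not carry out the $\phi_{M/F}\circ\phi_{E/M}$ bookkeeping you attempt, and the passage from the $H$-bound to the $G$-bound is left implicit there. If you want to reach $u_n^G(\sigma)\ge p^n$ honestly, at least in the abelian case (the only one invoked later, e.g.\ $G\cong\Z_p^d$ in Theorem~\ref{htd}), argue on $E/F$ itself with the mechanism underlying Proposition~\ref{breaks}: in characteristic $p$ the reciprocity map carries $U_F^{(m)}$ into $\Gal(E/F)^m$ and identifies $\Gal(E/F)_1$ with the (compact, hence closed) image of $U_F^{(1)}$, and $p$-th powering sends $U_F^{(m)}$ into $U_F^{(pm)}$ because $(1+x)^p=1+x^p$; hence $\sigmabold\in\Gal(E/F)_1$ forces $\sigmabold^{p^n}\in\Gal(E/F)^{p^n}$, i.e.\ $u_n^G(\sigma)\ge p^n$, with no comparison between $\phi_G$ and $\phi_H$ needed.
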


\begin{proof}
Let $H\cong\Z_p$ be the closure of the subgroup of $G$
generated by $\sigma$, and let $\Hbold$ be the subgroup
of $\Gal(E/F)$ that corresponds to $H$.  Set
$D=E^{\Hbold}$.  If $|G:H|$ is finite then
$(K,H)\cong\f(E'/F')$, with $F'=D$ and $E'=E$.  If
$|G:H|$ is infinite then it follows from
\cite[Prop.\,3.4.1]{cn} that $(K,H)\cong\f(E'/F')$,
where $F'=X_F(D)$ and $E'=X_{D/F}(E)$ is the injective
limit of $X_F(C)$ over all finite subextensions $C/D$ of
$E/D$.  In either case we get $(K,H)\cong\f(E'/F')$ with
$\char(F')=p$.  Hence by Proposition~\ref{breaks} and
Theorem~\ref{ram} we have $u_n^G(\sigma)\ge p^n$ for all
$n\ge0$.
\end{proof}

     The case $d=1$ of the following theorem can be
deduced from \cite[Th.\,3]{ls}; the case $d=2$ follows
from \cite[Th.\,3.1]{liht}.  

\begin{theorem} \label{htd}
Let $K$ be a local field of characteristic $p$ with
residue field $k$.  Let $G$ be a subgroup of $\Aut_k(K)$
such that $G\cong\Z_p^d$ for some $d\ge1$.  Then the
following statements are equivalent:
\begin{enum1}
\item There is $M\in\N$ such that for every $n\ge M$ and
every $\sigma\in G\smallsetminus\{\id_K\}$ we
\vspace{1mm} have
$\dst\frac{i_{n+1}(\sigma)-i_n(\sigma)}{i_n(\sigma)
-i_{n-1}(\sigma)}=p^d$.
\item For every $\sigma\in G\smallsetminus\{\id_K\}$ the
limit $\dst\lim_{n\ra\infty}\frac{i_n(\sigma)}{p^{dn}}$
exists and is nonzero.
\item There is $E/F\in\LL$ such that $\char(F)=0$ and
$\f(E/F)\cong(K,G)$.
\end{enum1}
\end{theorem}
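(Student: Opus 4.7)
The plan is to prove $(1)\Rightarrow(2)$, $(3)\Rightarrow(1)$, and $(2)\Rightarrow(3)$; the first two implications are short, while $(2)\Rightarrow(3)$ carries most of the weight.

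For $(1)\Rightarrow(2)$, set $\Delta_n=i_{n+1}(\sigma)-i_n(\sigma)$. The recursion gives $\Delta_n=p^{d(n-M+1)}\Delta_{M-1}$ for $n\ge M-1$, and telescoping yields
\[ i_n(\sigma)=i_{M-1}(\sigma)+\Delta_{M-1}\cdot\frac{p^{d(n-M+1)}-1}{p^d-1} \qquad (n\ge M-1). \]
Dividing by $p^{dn}$ and letting $n\to\infty$ shows that $i_n(\sigma)/p^{dn}$ converges to $\Delta_{M-1}/(p^{d(M-1)}(p^d-1))$, which is strictly positive because $i_n(\sigma)$ is strictly increasing when $\sigma\neq\id_K$ (the sequence $u_n^G(\sigma)$ is strictly increasing under $p$th powers in either characteristic, and $\psi_G$ is strictly increasing).

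For $(3)\Rightarrow(1)$ I would invoke Sen's theorem. Via Theorem~\ref{ram}, Sen provides an integer $N$ (depending only on $E/F$) such that for every $\tau\in G\smallsetminus G(1)$ and every $n\ge N$, $u_n^G(\tau)=u_{n-1}^G(\tau)+e_F$ and $G^{u_n^G(\tau)}=G(n)$. Since $\psi_G'(x)=|G:G^x|$, this gives
\[ i_n(\tau)-i_{n-1}(\tau)=\int_{u_{n-1}^G(\tau)}^{u_n^G(\tau)}|G:G^t|\,dt=e_F\cdot p^{dn}, \]
so the ratio of successive differences is exactly $p^d$. For $\sigma\in G(j)\smallsetminus G(j+1)$ with $j\ge 0$, writing $\sigma=\tau^{p^j}$ and using $i_n(\sigma)=i_{n+j}(\tau)$ reduces the recursion for $\sigma$ at time $n$ to that for $\tau$ at time $n+j$, which holds once $n+j\ge N$. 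Hence $M=N$ works uniformly over all $\sigma\in G\smallsetminus\{\id_K\}$.

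The heart of the proof is $(2)\Rightarrow(3)$. By Theorem~\ref{equiv} there is some $E/F\in\LL^{ab}$ with $\f(E/F)\cong(K,G)$; I need to show $\char(F)=0$, which by the first theorem of this section is equivalent to $\lim_{x\to\infty}\log_p|G:G^x|/x>0$. My strategy is to convert $(2)$ into exponential growth of $\psi_G$ via Lemma~\ref{Gx}. The first step is to upgrade the per-$\sigma$ bounds $i_n(\sigma)=O(p^{dn})$ supplied by $(2)$ into a uniform bound $i_n(\sigma)<Cp^{dn}$ valid for all $n\ge 0$ and all $\sigma\in G\smallsetminus G(1)$. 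The subset $G\smallsetminus G(1)=G\smallsetminus pG$ is compact (the complement of the open subgroup $G(1)$), each $i_n$ is locally constant off the identity, and the ultrametric property $i(\sigma\tau)\ge\min(i(\sigma),i(\tau))$ together with $c_{\rho^p}=p^d c_\rho$ should constrain the limits $c_\sigma=\lim i_n(\sigma)/p^{dn}$ to a bounded set on $G\smallsetminus G(1)$. Once such a $C$ is in hand, Lemma~\ref{Gx} with $\lambda=d$ yields $|G:G_x|>x/C$, so $\phi_G'(x)=1/|G:G_x|<C/x$ and $\phi_G(x)=O(\log x)$; inverting, $\psi_G(x)$ grows at least like $e^{Ax}$ for some $A>0$, forcing $|G:G^x|=\psi_G'(x)$ to be eventually exponential in $x$ and hence $e_F<\infty$. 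The main obstacle is the uniform bound step---without it Lemma~\ref{Gx} does not apply and the chain breaks---but the compactness of $G\smallsetminus G(1)$ together with the ultrametric constraints on the $c_\sigma$ should suffice.
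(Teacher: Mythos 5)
Your $(1)\Rightarrow(2)$ is the paper's argument, and your overall strategy for $(2)\Rightarrow(3)$ (uniform bound, then Lemma~\ref{Gx} with $\lambda=d$, then logarithmic growth of $\phi_G$ contradicting characteristic $p$) is also the paper's. But the decisive step of $(2)\Rightarrow(3)$ --- producing a single constant $C$ with $i_n(\sigma)<Cp^{dn}$ for \emph{all} $n\ge0$ and all $\sigma\in G\smallsetminus G(1)$ --- is exactly the point you leave at ``compactness plus ultrametric constraints should suffice,'' and as stated it does not go through. That each $i_n$ is locally constant on $G\smallsetminus\{\id_K\}$ gives nothing uniform, because the neighborhood of constancy shrinks as $n\to\infty$; and the boundedness of the limits $c_\sigma$ on $G\smallsetminus G(1)$ is precisely what has to be proved, not an input. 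The paper's mechanism is: set $f(\sigma)=\sup_{n\ge0}i_n(\sigma)/p^{dn}$ on $S=G\smallsetminus G(1)$ and show $f$ is locally constant. For this one applies hypothesis (2) not only to $\sigma$ (upper bound $i_n(\sigma)\le Ap^{dn}$ with $A=f(\sigma)$) but also as a \emph{lower} bound to a finite generating set $\tau_1,\dots,\tau_d$ of $G$, obtaining $B_i>0$ with $i_n(\tau_i)\ge B_ip^{dn}$ for all $n$; choosing $r$ with $p^{dr}B_i>A$ forces $i_n(\gamma)>Ap^{dn}\ge i_n(\sigma)$ for every $\gamma\in G(r)$ and every $n$ simultaneously, whence $i_n(\sigma\gamma)=i_n(\sigma)$ for all $n$ and $f(\sigma\gamma)=f(\sigma)$. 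Only then does compactness of $S$ give the uniform $C$, after which your chain (or, as in the paper, Corollary~\ref{any} applied to the maximizer) finishes. The missing idea, in short, is the uniform-in-$n$ neighborhood obtained by feeding the generators into statement (2); note that the nonvanishing (lower-bound) half of (2) is as essential here as the upper bounds.

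Separately, your $(3)\Rightarrow(1)$ overstates Sen's theorem. Sen, via Theorem~\ref{ram}, gives only a sandwich $G^{ne_F+c}\le G(n)\le G^{ne_F-c}$; it does not give $G^{u_n^G(\tau)}=G(n)$, and that identity fails in general for $d\ge2$ (different elements of $G\smallsetminus G(1)$ can have genuinely different upper breaks, so $|G:G^t|$ need not be constant equal to $p^{dn}$ on $(u_{n-1}^G(\tau),u_n^G(\tau)]$); consequently your formula $i_n(\tau)-i_{n-1}(\tau)=e_Fp^{dn}$ is unjustified. What is true, and suffices, is that for abelian $G$ with $\char(F)=0$ one has $G^{x+e_F}=pG^x$ for all large $x$ (by local class field theory, since the $p$-power map carries $U_F^{(m)}$ onto $U_F^{(m+e_F)}$ once $m>e_F/(p-1)$); this yields $u_n^G(\tau)=u_{n-1}^G(\tau)+e_F$ eventually and, by comparing the integrals $\int|G:G^t|\,dt$ over the two translated intervals, the exact ratio $p^d$ without claiming the integrand is constant. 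Alternatively, do as the paper does and cite \cite{hl} for the implication $(3)\Rightarrow(1)$; together with your $(1)\Rightarrow(2)\Rightarrow(3)$ that closes the cycle.
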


\begin{proof} The equivalence of statements (1) and (3)
is proved in Theorem~1 of \cite{hl}.  We will prove
$(1)\Rightarrow(2)\Rightarrow(3)$.
\\[\smallskipamount]
$(1)\Rightarrow(2)$: Let $\sigma\in
G\smallsetminus\{\id_K\}$ and set
\[A=\frac{p^di_M(\sigma)-i_{M+1}(\sigma)}{p^d-1},
\hspace{1cm}
B=\frac{i_{M+1}(\sigma)-i_M(\sigma)}{p^d-1}.\]
Then for $n\ge M$ we
have $i_n(\sigma)=A+Bp^{d(n-M)}$.  It follows that
\[\lim_{n\ra\infty}\frac{i_n(\sigma)}{p^{dn}}
=\lim_{n\ra\infty}\frac{A+Bp^{d(n-M)}}{p^{dn}}
=\frac{B}{p^{dM}}\not=0.\]
$(2)\Rightarrow(3)$: By Theorem~\ref{equiv} there is
$E/F\in\LL$ such that $(K,G)\cong\f(E/F)$.  Let
$S=G\smallsetminus G(1)$ and define $f:S\ra\R$ by
$\dst f(\sigma)=\sup_{n\ge0}\frac{i_n(\sigma)}{p^{dn}}$.
Since we are assuming that statement (2) holds, the
function $f$ is well-defined.  We claim that $f$ is
locally constant.  Let $\sigma\in S$ and set
$A=f(\sigma)$.  Then $i_n(\sigma)\le Ap^{dn}$ for all
$n\ge0$.  Let $\{\tau_1,\ldots,\tau_d\}$ be a
generating set for the $\Z_p$-module $G$.  Since
$\dst\lim_{n\ra\infty}\frac{i_n(\tau_i)}{p^{dn}}>0$
there is $B_i>0$ such that $i_n(\tau_i)\ge B_ip^{dn}$
for all $n\ge0$.  Choose $r\ge0$ such that $B_ip^{rn}>A$
for $1\le i\le d$.  Then $i_{r+n}(\tau_i)>Ap^{dn}$ for
all $n\ge0$ and $1\le i\le d$.  It follows that for
every $\gamma\in G(r)$ and $n\ge0$ we have
$i_n(\gamma)>Ap^{dn}$.  Since $Ap^{dn}\ge i_n(\sigma)$
we get $i_n(\sigma\gamma)=i_n(\sigma)$.  Therefore
$f(\sigma\gamma)=f(\sigma)$ for all $\gamma\in G(r)$,
which shows that $f$ is locally constant.

     Since $S$ is compact there is $\sigma_0\in S$ such
that $f(\sigma_0)$ is the maximum value of $f$.  Setting
$C=f(\sigma_0)+1$ we get $i_n(\sigma)<Cp^{dn}$ for all
$\sigma\in S$ and $n\ge0$.  Hence by Lemma~\ref{Gx} we
have $|G:G_x|>x/C$ for all $x>0$.  Therefore for $x\ge1$
we get
\[\phi_G(x)=1+\int_1^x\frac{dt}{|G:G_t|}
<1+\int_1^xCt^{-1}\,dt=1+C\log x.\]
Since $i_n(\sigma_0)<Cp^{dn}$ this implies
\[u_n^G(\sigma_0)=\phi_G(i_n(\sigma_0))<
1+C(\log C+(\log p)\cdot dn)\]
for all $n\ge0$.  If $\char(F)=p$ then by
Corollary~\ref{any} we have $u_n^G(\sigma_0)\ge p^n$ for
all $n$, which gives a contradiction.  It follows that
$\char(F)=0$.
\end{proof}

\begin{remark}
We easily see that statement (2) of Theorem~\ref{htd}
implies
\begin{enumerate}[(1)]
\setcounter{enumi}{3}
\item $\dst\lim_{n\ra\infty}
\frac{i_{n+1}(\sigma)}{i_n(\sigma)}=p^h$ for all
$\sigma\in G\smallsetminus\{\id_K\}$
\end{enumerate}
(cf.\ equation (\ref{Ht23}) below).
It follows from \cite[Th.\,3.1]{liht} that when $d=2$
statement (4) is equivalent to statements (1)--(3) of
Theorem~\ref{htd}.  It would be interesting to know
whether this holds when $d\ge3$.
\end{remark}

\section{Heights of elements of $\Aut_k(K)$} \label{hts}

     Let $K$ be a local field of characteristic $p$ with
residue field $k$ and let $\sigma$ be a wild
automorphism of $K$.  There are several possible
definitions for the height of $\sigma$.  (In all three
cases we set $\Ht_j(\sigma)=\infty$ if $\sigma$ has
finite order.)
\begin{enumerate}[(1)]
\setlength{\itemsep}{0cm}
\setlength{\parskip}{0cm}
\setlength{\parsep}{0cm}
\item Say $\Ht_1(\sigma)=h$ if there is $M\in\N$ such
that $\dst\frac{i_{n+1}(\sigma)-i_n(\sigma)}{i_n(\sigma)
-i_{n-1}(\sigma)}=p^h$ for all $n\ge M$.
\item Say $\Ht_2(\sigma)=h$ if $\dst\lim_{n\ra\infty}
\frac{i_n(\sigma)}{p^{hn}}$ exists and is nonzero.
\item Say $\Ht_3(\sigma)=h$ if $\dst\lim_{n\ra\infty}
\frac{i_{n+1}(\sigma)}{i_n(\sigma)}=p^h$.
\end{enumerate}
Let $1\le j\le3$.  Then $\Ht_j(\sigma)=h$ for at most
one $h>0$.  If there is no $h>0$ with $\Ht_j(\sigma)=h$
we say that $\Ht_j(\sigma)$ is undefined.

     The definition of $\Ht_1(\sigma)$ is implicit in
the definition of the height of an invertible stable
series given in \cite[Def.\,1.2]{lidynam}.  The
definition of $\Ht_2(\sigma)$ is motivated by statement
(2) of Theorem~\ref{htd}, and also by statement (2) of
\cite[Th.\,3.1]{liht}.  The definition of
$\Ht_3(\sigma)$ agrees with the definition of height
given in \cite[Def.\,1.1]{liht}.

It follows from the
proof of $(1)\Rightarrow(2)$ in Theorem~\ref{htd} that
if $\Ht_1(\sigma)=h$ then $\Ht_2(\sigma)=h$.  Suppose
$\Ht_2(\sigma)=h$.  Then
$\dst\lim_{n\ra\infty}\frac{i_n(\sigma)}{p^{hn}}=L$ for
some $L\not=0$.  Hence
\begin{equation} \label{Ht23}
\lim_{n\ra\infty}\frac{i_{n+1}(\sigma)}{i_n(\sigma)}
=\lim_{n\ra\infty}
\frac{i_{n+1}(\sigma)/p^{h(n+1)}}{i_n(\sigma)/p^{hn}}
\cdot p^h=\frac{L}{L}\cdot p^h=p^h,
\end{equation}
so we have $\Ht_3(\sigma)=h$.  In Example~\ref{3not2} we
will construct a wild automorphism $\sigma$ such that
$\Ht_3(\sigma)$ is defined but $\Ht_2(\sigma)$ is
undefined.  In Example~\ref{2not1} we will construct a
wild automorphism $\tau$ such that $\Ht_2(\tau)$ is
defined but $\Ht_1(\tau)$ is undefined.

\begin{prop}
Let $\sigma,\sigma'$ be wild automorphisms of $K$ such
that $\Ht_j(\sigma)=h$, $\Ht_j(\sigma')=h'$, and
$\sigma\sigma'=\sigma'\sigma$.  Let $1\le j\le3$ and
$\alpha\in\Z_p\smallsetminus\{0\}$.  Then
\begin{enuma}
\item $\Ht_j(\sigma^{\alpha})=h$.
\item If $h<h'$ then $\Ht_j(\sigma\sigma')=h$.
\end{enuma}
\end{prop}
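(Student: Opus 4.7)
For part (a), the plan is to write $\alpha = up^r$ with $u \in \Z_p^{\times}$ and $r \geq 0$, so that $\sigma^\alpha = (\sigma^u)^{p^r}$, and to treat the two factors separately. Raising to the $p^r$-th power merely shifts the sequence $(i_n(\sigma))$ by $r$, and a direct check shows that each of the three defining conditions of $\Ht_j$ is preserved under such a shift. For the unit case $\sigma \mapsto \sigma^u$, one first disposes of the finite-order case (where both heights are $\infty$ by definition) and then, in the infinite-order case, sets $H = \overline{\langle\sigma\rangle} \cong \Z_p$ and applies Theorem~\ref{equiv} to obtain $E/F \in \LL^{ab}$ with $\f(E/F) \cong (K, H)$. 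The upper ramification subgroups of the $\Z_p$-extension $E/F$ are precisely $\Gal(E/F)(n) = \{\tau^{p^n} : \tau \in \Gal(E/F)\}$ for $n \geq 0$, so any two elements with the same $p$-adic valuation share a common ramification number. Transferring this to $H$ via Theorem~\ref{ram} gives $i_n(\sigma^u) = i_n(\sigma)$ for every $u \in \Z_p^{\times}$, completing (a).

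For part (b), I would use the following refinement of the inequality $i(\gamma\delta) \geq \min\{i(\gamma), i(\delta)\}$ noted at the end of Section~\ref{norms}: equality holds whenever $i(\gamma) \neq i(\delta)$. This follows immediately from
\[
\gamma\delta(\pi_K) - \pi_K = \gamma\bigl(\delta(\pi_K) - \pi_K\bigr) + \bigl(\gamma(\pi_K) - \pi_K\bigr)
\]
together with the fact that $\gamma$ preserves $v_K$. Since $\sigma$ and $\sigma'$ commute, $(\sigma\sigma')^{p^n} = \sigma^{p^n} (\sigma')^{p^n}$, so whenever $i_n(\sigma) \neq i_n(\sigma')$ we get $i_n(\sigma\sigma') = \min\{i_n(\sigma), i_n(\sigma')\}$. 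The hypothesis $h < h'$, together with any of the three height definitions, forces $i_n(\sigma) < i_n(\sigma')$ for all sufficiently large $n$: the $\Ht_2$ condition gives this immediately, the closed-form $i_n(\sigma) = A + Bp^{hn}$ obtained as in the $(1)\Rightarrow(2)$ step of Theorem~\ref{htd} handles $\Ht_1$, and for $\Ht_3$ a short telescoping argument yields $i_n(\sigma) = p^{hn + o(n)}$. Consequently $i_n(\sigma\sigma') = i_n(\sigma)$ for all $n$ large, and plugging this into the defining condition of $\Ht_j$ yields $\Ht_j(\sigma\sigma') = h$.

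The main obstacle is the unit-power step in part (a); once one knows $i_n(\sigma^u) = i_n(\sigma)$ for $u \in \Z_p^{\times}$, the rest reduces to mechanical manipulation of the three limit definitions. That step in turn relies crucially on the Wintenberger equivalence together with the elementary ramification structure of $\Z_p$-extensions, so it is the only place in the proof where nontrivial input from the field of norms is needed.
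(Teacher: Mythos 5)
Your proof is correct, and for part (b) it is essentially the paper's argument: the paper likewise observes that $h<h'$ forces $i_n(\sigma)<i_n(\sigma')$ for all large $n$ and concludes $i_n(\sigma\sigma')=i_n(\sigma)$; you merely make explicit the ultrametric refinement $i(\gamma\delta)=\min\{i(\gamma),i(\delta)\}$ when $i(\gamma)\neq i(\delta)$ and the three case checks ($\Ht_1$ via the closed form, $\Ht_3$ via Lemma~\ref{H3}), which the paper leaves implicit. For part (a) the paper simply writes $i_n(\sigma^{\alpha})=i_{n+w}(\sigma)$ with $w=v_p(\alpha)$ and stops; your decomposition $\alpha=up^{r}$ amounts to the same identity, but you justify the unit step $i_n(\sigma^{u})=i_n(\sigma)$ by invoking Theorem~\ref{equiv} and the ramification filtration of a $\Z_p$-extension $E/F$. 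That route is valid, but it is heavier than necessary, and your closing claim that this is where ``nontrivial input from the field of norms is needed'' is not accurate: unit invariance is elementary. One can argue entirely inside $\Aut_k(K)$, since the ramification subgroups $H_x$ of the closure $H$ of $\langle\sigma\rangle$ (a cyclic $p$-group or a copy of $\Z_p$) are closed subgroups, hence of the form $\{\tau^{p^m}:\tau\in H\}$, and $\sigma^{u}$ generates the same closed subgroup as $\sigma$, so the two lie in exactly the same $H_x$; alternatively, $i(\tau^{m})\ge i(\tau)$ for every positive integer $m$ by the inequality $i(\gamma\delta)\ge\min\{i(\gamma),i(\delta)\}$, this passes to $u\in\Z_p^{\times}$ by continuity, and applying it also to $u^{-1}$ gives equality. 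So the proposition, unlike the results it feeds into, requires no field-of-norms input at all; otherwise your write-up matches the paper's proof step for step.
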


\begin{proof}
Let $w=v_p(\alpha)$.  Then $i_n(\sigma^{\alpha})
=i_{n+w}(\sigma)$, so we get $\Ht_j(\sigma^{\alpha})
=\Ht_j(\sigma)=h$.  If $h<h'$ then for $n$ sufficiently
large we have $i_n(\sigma)<i_n(\sigma')$, and hence
\[i_n(\sigma\sigma')=i(\sigma^{p^n}(\sigma')^{p^n})
=i(\sigma^{p^n})=i_n(\sigma).\]
Therefore $\Ht_j(\sigma\sigma')=\Ht_j(\sigma)=h$.
\end{proof}

     Let $G$ be a closed subgroup of $\Aut_k(K)$ such
that $G\cong\Z_p^d$ and let $1\le j\le3$.  It follows
from the proposition that $\Ht_j(\sigma)$ takes on at
most $d$ distinct values for $\sigma\in
G\smallsetminus\{\id_K\}$.  Suppose that $\Ht_j(\sigma)$
is defined for every $\sigma\in G$.  Then
$\Ht_3(\sigma)=\Ht_j(\sigma)$ for every $\sigma\in G$,
and for every $h>0$ the set
\[G[h]=\{\sigma\in G:\Ht_3(\sigma)\ge h\}\]
is a $\Z_p$-submodule of $G$ such that $G/G[h]$ is a
free $\Z_p$-module.  We define the multiplicity of $h>0$
to be
\[m(h)=\rank_{\Z_p}(G[h])-\rank_{\Z_p}(G[h+\epsilon])\]
for sufficiently small $\epsilon>0$.  Then $G$ has $d$
heights when they are counted with multiplicities.

     Let $\sigma$ be a wild automorphism of $K$.  The
possibilities for $\Ht_1(\sigma)$ are quite limited,
since if $\Ht_1(\sigma)=h$ then $p^h$ must be
rational.  On the other hand, Proposition~\ref{exists}
below shows that if $h=1$ or $h\ge2$ then there exists a
wild automorphism $\sigma$ with
$\Ht_3(\sigma)=\Ht_2(\sigma)=h$.  To prove the
proposition we need a lemma.

\begin{lemma} \label{H3}
Let $\sigma$ be a wild automorphism of $K$ such that
$\Ht_3(\sigma)=h$ for some $h>0$.  Then for every
$\epsilon>0$ there exists $N\ge1$ such that for all
$n\ge N$ we have $p^{(h-\epsilon)n}\le i_n(\sigma)\le
p^{(h+\epsilon)n}$.
\end{lemma}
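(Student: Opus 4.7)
The plan is a standard ``Cesàro-for-ratios'' argument: control $i_n(\sigma)$ by telescoping the ratios $i_{n+1}(\sigma)/i_n(\sigma)$, which by hypothesis converge to $p^h$.

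First, fix $\epsilon > 0$ and choose $\delta$ with $0 < \delta < \epsilon$. By the definition of $\Ht_3(\sigma) = h$ there exists $N_0 \geq 1$ such that for every $n \geq N_0$,
\[
p^{h-\delta} \;<\; \frac{i_{n+1}(\sigma)}{i_n(\sigma)} \;<\; p^{h+\delta}.
\]
Telescoping the ratio from $N_0$ up to $n > N_0$ gives
\[
i_{N_0}(\sigma)\cdot p^{(h-\delta)(n-N_0)} \;<\; i_n(\sigma) \;<\; i_{N_0}(\sigma)\cdot p^{(h+\delta)(n-N_0)}.
\]
Setting $C_1 = i_{N_0}(\sigma)\cdot p^{-(h+\delta)N_0}$ and $C_2 = i_{N_0}(\sigma)\cdot p^{-(h-\delta)N_0}$, these bounds become $C_2\cdot p^{(h-\delta)n} < i_n(\sigma) < C_1\cdot p^{(h+\delta)n}$.

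Next, since $\delta < \epsilon$, the constants $C_1, C_2$ are dominated by $p^{(\epsilon-\delta)n}$ and $p^{-(\epsilon-\delta)n}$ respectively for $n$ sufficiently large: choose $N \geq N_0$ large enough that $C_1 \leq p^{(\epsilon-\delta)n}$ and $C_2 \geq p^{-(\epsilon-\delta)n}$ hold simultaneously for all $n \geq N$. Then for all $n \geq N$,
\[
p^{(h-\epsilon)n} \;\leq\; C_2 \cdot p^{(h-\delta)n} \;\leq\; i_n(\sigma) \;\leq\; C_1\cdot p^{(h+\delta)n} \;\leq\; p^{(h+\epsilon)n},
\]
which is the desired conclusion.

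There is essentially no obstacle here; the only thing to watch is that one must first fix a slack $\delta < \epsilon$ before applying the ratio convergence, so that the multiplicative constants $C_1, C_2$ can be absorbed into the exponential by enlarging $N$. The argument uses nothing beyond the definition of $\Ht_3$ and the fact that $i_{N_0}(\sigma) > 0$ (which holds because $\sigma$ has infinite order, as $\Ht_3(\sigma)$ is defined and finite).
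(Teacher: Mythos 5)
Your proof is correct and follows essentially the same route as the paper: both arguments bound the ratio $i_{n+1}(\sigma)/i_n(\sigma)$ within a slack of $p^h$ (the paper phrases this additively via $\log_p$ with slack $\tfrac12\epsilon$, you multiplicatively with slack $\delta<\epsilon$), telescope, and then absorb the resulting constant into the exponential by enlarging $N$. No issues.
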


\begin{proof}
Let $\epsilon>0$.  Since $\Ht_3(\sigma)=h$ there is
$M\ge1$ such that for all $n\ge M$  we have
\begin{align*}
|\log_p(i_{n+1}(\sigma))-\log_p(i_n(\sigma))-h|
&\le\tst\frac12\epsilon \\[.1cm]
|\log_p(i_n(\sigma))-\log_p(i_M(\sigma))-(n-M)h|
&\le\tst\frac12(n-M)\epsilon.
\end{align*}
Let $C=|\log_p(i_M(\sigma))-Mh|$ and
$N=\max\{M,\lceil2C/\epsilon\rceil\}$.  Then for
$n\ge N$ we have
\[|\log_p(i_n(\sigma))-nh|\le\tst\frac12(n-M)\epsilon+C\le
\frac12n\epsilon+\frac12n\epsilon=n\epsilon.\]
It follows that $p^{(h-\epsilon)n}\le i_n(\sigma)\le
p^{(h+\epsilon)n}$ for $n\ge N$.
\end{proof}

\begin{prop} \label{exists}
Let $K$ be a local field of characteristic $p$ with
residue field $k$ and let $h>0$ be a real number.  Then
the following statements are equivalent:
\begin{enum1}
\item There exists a wild automorphism $\sigma$ of $K$
such that $\Ht_2(\sigma)=h$.
\item There exists a wild automorphism $\sigma$ of $K$
such that $\Ht_3(\sigma)=h$.
\item Either $h=1$ or $h\ge2$.
\end{enum1}
\end{prop}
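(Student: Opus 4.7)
The plan is to prove the cycle of implications $(1)\Rightarrow(2)\Rightarrow(3)\Rightarrow(1)$. The implication $(1)\Rightarrow(2)$ is already established by the computation in equation~\eqref{Ht23} displayed above the proposition, so it remains to handle $(2)\Rightarrow(3)$ and $(3)\Rightarrow(1)$.

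For $(2)\Rightarrow(3)$, let $\sigma\in\Aut_k(K)$ be a wild automorphism with $\Ht_3(\sigma)=h$ a positive real. Since $\Ht_3(\sigma)$ is finite, $\sigma$ has infinite order, so the closure $H$ of $\langle\sigma\rangle$ in $\Aut_k(K)$ is isomorphic to $\Z_p$. Theorem~\ref{equiv} applied to $(K,H)\in\C^{ab}$ produces a totally ramified $\Z_p$-extension $E/F\in\LL^{ab}$ with $\f(E/F)\cong(K,H)$. The argument then splits on $\char(F)$. If $\char(F)=0$, Theorem~\ref{htd} with $d=1$ gives $\Ht_2(\sigma)=1$, so $h=\Ht_3(\sigma)=1$. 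If $\char(F)=p$, Proposition~\ref{breaks} forces the upper ramification breaks $b_n=u_n(\sigma)$ to satisfy $b_n\ge p^n b_0\ge p^n$; combining this with Theorem~\ref{ram} and the formula $i_n(\sigma)-i_{n-1}(\sigma)=p^n(b_n-b_{n-1})$ yields $i_n(\sigma)\ge(p-1)p^{2n-1}$ for all $n\ge1$. The hypothesis $\Ht_3(\sigma)=h$ gives $\log_p i_{n+1}(\sigma)-\log_p i_n(\sigma)\to h$, so by Stolz--Ces\`aro $i_n(\sigma)^{1/n}\to p^h$, and comparison with the lower bound forces $h\ge2$.

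For $(3)\Rightarrow(1)$, I construct an explicit wild automorphism $\sigma\in\Aut_k(K)$ with $\Ht_2(\sigma)=h$ in each allowable case. When $h=1$, let $F_0=\mathrm{Frac}(W(k))$ and let $E_0/F_0$ be any totally ramified $\Z_p$-extension, for instance the cyclotomic $\Z_p$-extension. Then $E_0/F_0\in\LL$ with $\char(F_0)=0$, and $\f(E_0/F_0)=(X_{F_0}(E_0),G_0)$ with $G_0\cong\Z_p$ and $X_{F_0}(E_0)\cong K$; Theorem~\ref{htd} then gives $\Ht_2(\sigma)=1$ for any topological generator $\sigma$ of the transported copy of $G_0$. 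When $h\ge2$, I work in characteristic $p$ with $F=K$. For $h=2$, take the upper ramification sequence $b_n=p^n$, which satisfies Proposition~\ref{breaks}; a direct geometric-series computation gives $i_n(\sigma)=1+p(p^{2n}-1)/(p+1)$ for a topological generator, so $i_n(\sigma)/p^{2n}\to p/(p+1)$ and $\Ht_2(\sigma)=2$. For $h>2$ set $\beta=p^{h-1}>p$ and choose each $b_n$ to be an integer with $|b_n-\beta^n|\le1$ and $p\nmid b_n$, adjusting finitely many small-$n$ terms so that $b_{n+1}>pb_n$ for all $n$ (this holds automatically for $n$ large because $\beta>p$). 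A direct estimate gives $p^n(b_n-b_{n-1})=(1-p^{-(h-1)})p^{hn}+O(p^n)$, and summing as a geometric series yields $i_n(\sigma)/p^{hn}\to(1-p^{-(h-1)})/(1-p^{-h})>0$, so $\Ht_2(\sigma)=h$. In each case Proposition~\ref{breaks} supplies the $\Z_p$-extension $E/F$, and then $\f$ together with the identification $X_F(E)\cong K$ transports a topological generator of $\Gal(E/F)$ to the desired $\sigma\in\Aut_k(K)$.

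The main obstacle is the construction for non-integer $h>2$ in the last step: one must verify not merely that $i_n(\sigma)$ has the correct order of magnitude $p^{hn}$ but that $i_n(\sigma)/p^{hn}$ actually converges, and must check that the integer rounding needed to satisfy both $p\nmid b_n$ and $b_n>pb_{n-1}$ contributes only error of size $O(p^n)$, negligible relative to the $p^{hn}$ main term as soon as $h>1$. All other steps reduce either to a direct appeal to Theorem~\ref{htd} or Proposition~\ref{breaks}, or to a routine geometric-series estimate.
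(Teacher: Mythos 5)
Your proposal is correct, and its skeleton (the cycle $(1)\Rightarrow(2)\Rightarrow(3)\Rightarrow(1)$, the dichotomy on $\char(F)$ via the field of norms, Theorem~\ref{htd} for the characteristic-$0$ side, and Proposition~\ref{breaks} to realize upper break sequences) is the paper's, but both technical steps are carried out by genuinely different arguments. For $(2)\Rightarrow(3)$ in the case $\char(F)=p$, the paper converts the hypothesis $h<2$ into an upper bound on $i_n(\sigma)$ via Lemma~\ref{H3}, feeds that into Lemma~\ref{Gx} to bound $|G:G_x|$, integrates to bound $\phi_G$, and contradicts the lower bound $u_n^G(\sigma)\ge p^n$ of Corollary~\ref{any}; you instead use the necessary conditions of Proposition~\ref{breaks} in the lower numbering, $i_n(\sigma)-i_{n-1}(\sigma)=p^n\bigl(u_n(\sigma)-u_{n-1}(\sigma)\bigr)\ge(p-1)p^{2n-1}$, and a Ces\`aro argument on $\log_p i_n(\sigma)$ to force $h\ge2$. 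Your route is shorter and bypasses Lemmas~\ref{H3} and~\ref{Gx} entirely; the paper's upper-numbering route is the pattern that scales to the rank-$d$ setting of Theorem~\ref{htd}. For $(3)\Rightarrow(1)$ with $h>2$, the paper chooses the upper breaks $a_n$ recursively so that the lower breaks $b_n$ track $\frac{p^{h(n+1)}-1}{p^h-1}$, with the constraints $p\nmid a_n$ and $a_{n+1}>pa_n$ verified inductively as part of the recursion; you prescribe $a_n$ to be a non-$p$-divisible integer within $1$ of $p^{(h-1)n}$ and let a geometric-series estimate give $i_n(\sigma)/p^{hn}\to(p^h-p)/(p^h-1)$, which is more transparent but does require the initial-segment patch you flag, since for $h$ close to $2$ the condition $a_{n+1}>pa_n$ can fail for small $n$: a concrete fix is to take $a_n=p^n$ for $n\le N$ and switch to the rounded values for $n>N$, with $N$ chosen so that $p^{(h-1)(N+1)}-1>p^{N+1}$ and $p^{(h-1)n}(p^{h-1}-p)>p+1$ for all $n\ge N$; this alters $i_n(\sigma)$ only by a bounded amount and so does not affect the limit. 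The $h=1$ and $h=2$ cases coincide with the paper's (your cyclotomic $\Z_p$-extension of $\mathrm{Frac}(W(k))$ is a clean instance of the paper's characteristic-$0$ construction, and your $h=2$ break sequence and limit $p/(p+1)$ agree with the paper's); one cosmetic point is that you write $b_n$ for the upper breaks, whereas the paper reserves $b_n$ for the lower breaks.
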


\begin{proof}
$(1)\Rightarrow(2)$: If $\Ht_2(\sigma)=h$ then
$\Ht_3(\sigma)=h$.
\\[\smallskipamount]
$(2)\Rightarrow(3)$: Let $\sigma$ be a wild automorphism
of $K$ such that $\Ht_3(\sigma)=h$, and let
$G=\sigma^{\Z_p}$ be the closure of the subgroup of
$\Aut_k(K)$ generated by $\sigma$.  By
Theorem~\ref{equiv} there is a local field with residue
field $k$ and a totally ramified $\Z_p$-extension $E/F$
such that $\f(E/F)\cong(K,G)$.  If $\char(F)=0$ then by
Theorem~\ref{htd} we get $h=1$.  Suppose $\char(F)=p$
and $h<2$.  Then there is $\delta$ such that
$0<\delta<1$ and $h<2-\delta$.  Let
$\epsilon=2-\delta-h>0$.  Then by Lemma~\ref{H3}
there is $N\ge1$ such that for $n\ge N$ we have
$i_n(\sigma)\le p^{(h+\epsilon)n}=p^{(2-\delta)n}$.
Hence there is $C>0$ such that
$i_n(\sigma)<Cp^{(2-\delta)n}$ for all $n\ge0$.
It follows from Lemma~\ref{Gx} that
$|G:G_x|>(x/C)^{1/(2-\delta)}$ for all $x\ge0$.
Therefore we have
\[\phi_G(x)=\int_0^x\frac{dt}{|G:G_t|}<
\int_0^xC^{1/(2-\delta)}t^{-1/(2-\delta)}\,dt
=Dx^{(1-\delta)/(2-\delta)}\]
with $D=\dst C^{1/(2-\delta)}\frac{2-\delta}{1-\delta}$.
It follows that
\begin{align*}
u_n^G(\sigma)&=\phi_G(i_n(\sigma)) \\
&<D\cdot i_n(\sigma)^{(1-\delta)/(2-\delta)} \\
&<D\cdot(Cp^{(2-\delta)n})^{(1-\delta)/(2-\delta)} \\
&=DC^{(1-\delta)/(2-\delta)}p^{(1-\delta)n}.
\end{align*}
By Corollary~\ref{any} we have $u_n^G(\sigma)\ge p^n$.
This gives a contradiction, so we have $h\ge2$ if
$\char(F)=p$.
\\[\smallskipamount]
$(3)\Rightarrow(1)$: Let $h\in\{1\}\cup[2,\infty)$.  To
prove statement (1) for $h$ it suffices by
Theorem~\ref{ram} to prove the following: There exists a
local field $F$ with residue field $k$ and a totally
ramified $\Z_p$-extension $E/F$ such that for any
generator $\sigmabold$ of the $\Z_p$-module $\Gal(E/F)$,
the limit $\dst\lim_{n\ra\infty}
\frac{i_n(\sigmabold)}{p^{hn}}$ exists and is nonzero.

     If $h=1$ we let $F$ be the field of fractions of
the ring of Witt vectors of $k$.  Using
Artin-Schreier-Witt theory we construct a totally
ramified $\Z_p$-extension $E/F$.  Let $\sigmabold$ be a
generator for the $\Z_p$-module $\Gal(E/F)$.  Then by
Theorem~\ref{htd} the limit
$\dst\lim_{n\ra\infty}\frac{i_n(\sigmabold)}{p^n}$
exists and is nonzero.  If $h=2$ we let $F=k((t))$.
By Proposition~\ref{breaks} there exists a $\Z_p$-extension
$E/F$ such that for all $n\ge0$ the $n$th upper
ramification break of $E/F$ is $a_n=p^n$.  The $n$th
lower ramification break of $E/F$ is then
$b_n=(p^{2n+1}+1)/(p+1)$.  Hence if $\sigmabold$ is a
generator for $\Gal(E/F)$ then
\[\lim_{n\ra\infty}\frac{i_n(\sigmabold)}{p^{2n}}
=\lim_{n\ra\infty}\frac{(p^{2n+1}+1)/(p+1)}{p^{2n}}
=\frac{p}{p+1}.\]
This proves (1) in the cases $h=1$ and $h=2$.

     Let $h>2$.  Then there is $n_0\ge1$ such that
\begin{equation} \label{assume}
\frac{p^{h-1}-1}{p-1}\ge1+\frac{1}{p^{(h-1)n}-2}
=\frac{p^{(h-1)n}-1}{p^{(h-1)n}-2}
\end{equation}
for all $n\ge n_0$.  For $0\le n\le n_0$ let
\begin{align*}
a_n=\frac{p^{n+1}-1}{p-1} \hspace{1cm}
b_n=\frac{p^{2(n+1)}-1}{p^2-1}.
\end{align*}
Then $a_0=b_0=1$, and for $1\le n\le n_0$ we have
$b_n=b_{n-1}+p^n(a_n-a_{n-1})$.  For $n>n_0$ we define
$a_n,b_n$ recursively by letting $a_n$ be the largest
integer such that $p\nmid a_n$ and
\begin{equation} \label{largest}
a_n\le a_{n-1}
+p^{-n}\left(\frac{p^{h(n+1)}-1}{p^h-1}-b_{n-1}\right)
\end{equation}
and setting $b_n=b_{n-1}+p^n(a_n-a_{n-1})$.  Then
for $n>n_0$ we have
\begin{equation} \label{ange}
a_n>a_{n-1}
+p^{-n}\left(\frac{p^{h(n+1)}-1}{p^h-1}-b_{n-1}\right)-2.
\end{equation}
It follows that
\begin{equation} \label{ineq}
\frac{p^{h(n+1)}-1}{p^h-1}-2p^n<b_n
\le\frac{p^{h(n+1)}-1}{p^h-1}.
\end{equation}

     We claim that
$\dst a_n\le\frac{p^{(h-1)(n+1)}-1}{p^{h-1}-1}$ for all
$n\ge0$.  Since $h>2$ this holds for $n\le n_0$.  To
prove the claim for $n_0+1$ we note that since $h>2$ we
have
\[p^{-n_0-1}\sum_{i=0}^{n_0}(p^{hi}-p^{2i})
\le\sum_{i=0}^{n_0}p^{-i}(p^{hi}-p^{2i})
=\sum_{i=0}^{n_0}(p^{(h-1)i}-p^{i}).\]
It follows that
\begin{align*}
p^{-n_0-1}\left(\frac{p^{h(n_0+1)}-1}{p^h-1}
-\frac{p^{2(n_0+1)}-1}{p^2-1}\right)
&\le\frac{p^{(h-1)(n_0+1)}-1}{p^{h-1}-1}
-\frac{p^{n_0+1}-1}{p-1} \\
p^{-n_0-1}\left(p^{h(n_0+1)}
+\frac{p^{h(n_0+1)}-1}{p^h-1}-b_{n_0}\right)
&\le p^{(h-1)(n_0+1)}
+\frac{p^{(h-1)(n_0+1)}-1}{p^{h-1}-1}-a_{n_0} \\
a_{n_0}+p^{-n_0-1}\left(
\frac{p^{h(n_0+2)}-1}{p^h-1}-b_{n_0}\right)
&\le\frac{p^{(h-1)(n_0+2)}-1}{p^{h-1}-1}.
\end{align*}
Hence by (\ref{largest}) we get $\dst a_{n_0+1}\le
\frac{p^{(h-1)(n_0+2)}-1}{p^{h-1}-1}$.  Let $n\ge n_0+2$
and assume the claim holds for $n-1$.  Since $n-1>n_0$
it follows from (\ref{largest}) and (\ref{ineq}) that
\begin{align*}
a_n&<a_{n-1}+p^{-n}\left(\frac{p^{h(n+1)}-1}{p^h-1}
-\frac{p^{hn}-1}{p^h-1}+2p^{n-1}\right)\\
&\le\frac{p^{(h-1)n}-1}{p^{h-1}-1}+p^{(h-1)n}+2p^{-1} \\
&=\frac{p^{(h-1)(n+1)}-1}{p^{h-1}-1}+2p^{-1}.
\end{align*}
Since $a_n$ is an integer the claim holds for $n$.
Hence by induction the claim holds for all $n\ge0$.

     We claim that $a_{n+1}>pa_n$ for all $n\ge0$.  For
$0\le n\le n_0-1$ we have $a_{n+1}=pa_n+1>pa_n$.  Let
$n\ge n_0$.  Then by the preceding paragraph and
(\ref{assume}) we get
\begin{align*}
(p-1)a_n&\le(p-1)\frac{p^{(h-1)(n+1)}-1}{p^{h-1}-1} \\
&\le p^{(h-1)(n+1)}-2.
\end{align*}
Hence by (\ref{ineq}) and (\ref{ange}) we get
\begin{align*}
pa_n&\le a_n+p^{(h-1)(n+1)}-2 \\
&\le a_n+p^{(h-1)(n+1)}+p^{-n-1}
\left(\frac{p^{h(n+1)}-1}{p^h-1}-b_n\right)-2 \\
&=a_n+p^{-n-1}
\left(\frac{p^{h(n+2)}-1}{p^h-1}-b_n\right)-2 \\
&<a_{n+1}.
\end{align*}

     Set $F=k((t))$.  Since $p\nmid a_n$ and
$a_{n+1}>pa_n$ for all $n\ge0$ it follows from
Proposition~\ref{breaks} that there is a totally
ramified $\Z_p$-extension $E/F$ whose upper ramification
breaks are $a_0,a_1,a_2,\ldots$.  Hence by construction
the lower ramification breaks of $E/F$ are
$b_0,b_1,b_2,\ldots$.  Let $\sigmabold$ be a generator
for the $\Z_p$-module $\Gal(E/F)$.  Then
$i_n(\sigmabold)=b_n$.  It follows from (\ref{ineq})
that
\[\lim_{n\ra\infty}\frac{i_n(\sigmabold)}{p^{hn}}
=\lim_{n\ra\infty}\frac{b_n}{p^{hn}}
=\frac{p^h}{p^h-1}.\]
This proves (1) for the cases with $h>2$.
\end{proof}

\section{Some examples} \label{examples}

In this section we construct several examples which
illustrate and limit the results of the previous
section.  We begin with an example of a subgroup $G$
of $\Aut_k(K)$ such that
\begin{enum1}
\item $G\cong\Z_p\times\Z_p$.
\item $\Ht_2(\gamma)$ is defined for all $\gamma\in G$.
\item There are $\sigma_1,\sigma_2\in
G\smallsetminus\{\id_K\}$ such that
$\Ht_2(\sigma_1)\not=\Ht_2(\sigma_2)$.
\end{enum1}

\begin{example}
Let $p>2$, set $F=\Fps((t))$, and let $F^{sep}$ be a
separable closure of $F$.  Let $E_1/F$ be a totally
ramified $\Z_p$-subextension of $F^{sep}/F$ such that
\begin{enum1}
\item The sequence of upper ramification breaks of
$E_1/F$ is $p^4+1,p^8+1,p^{12}+1,\ldots$.
\item The automorphism $\phibold$ of $F$ which fixes $t$
and acts as the Frobenius on $\Fps$ extends to an
automorphism of $E_1$ which induces
$\gammabold\mapsto\gammabold^{-1}$ on $\Gal(E_1/F)$.
\end{enum1}
Such an extension can be constructed as follows.  Set
\[\theta(X)=\frac{1+X}{1-X}=1+2X+2X^2+2X^3+\cdots
\in\F_p[[X]],\]
let $b\in\Fps$ satisfy $\phibold(b)=-b$, and set
$S=\{n\ge1:p\nmid n\}$.  Then every element $v$ in the
group $1+t\Fps[[t]]$ of 1-units in $F$ can be expressed
uniquely in the form
\[v=\prod_{n\in S}\theta(t^n)^{\lambda_n}\cdot
\prod_{n\in S}\theta(bt^n)^{\mu_n}\]
with $\lambda_n,\mu_n\in\Z_p$.  Let
$S_1=\{p^{4i+4}+1:i\ge0\}$ and
$S_2=S\smallsetminus S_1$.  We define a continuous
homomorphism $\chi:F^{\times}\ra\Z_p$ by setting
\begin{align*}
\chi(t)&=0 \\
\chi(r)&=0\text{ for }r\in\Fps^{\times} \\
\chi(\theta(t^n))&=0\text{ for }n\in S \\
\chi(\theta(bt^n))&=0\text{ for }n\in S_2 \\
\chi(\theta(bt^n))&=p^i\text{ for }n=p^{4i+4}+1.
\end{align*}
Let $E_1/F$ be the abelian extension associated to
$\chi$ by local class field theory.  Then
$\Gal(E_1/F)\cong\chi(F^{\times})=\Z_p$.  Since
$p^{4(i+1)+4}+1>p(p^{4i+4}+1)$ for all $i\ge0$, the
sequence of upper ramification breaks of $E_1/F$ is
${p^4+1,p^8+1,p^{12}+1,\ldots}$.  Since
$\theta(-X)=\theta(X)^{-1}$ we get
$\chi(\phibold(v))=-\chi(v)$ for all $v\in F^{\times}$.
Hence $\phibold$ stabilizes $E_1$ and induces
$\gammabold\mapsto\gammabold^{-1}$ on $\Gal(E_1/F)$.

     We now construct another totally ramified
$\Z_p$-subextension $E_2/F$ of $F^{sep}/F$.  Let
$\alpha\in\R$ be the solution to the linear equation
\begin{equation} \label{alpha}
p^6-p-p^2\alpha+p\alpha
=p^{9/2}\alpha-p^{9/2}+p^{5/2}-p^{-3/2}\alpha.
\end{equation}
Then $\dst\alpha=\frac{p^{5/2}+p}{p^{5/2}+1}\cdot p^{3/2}$
satisfies $p^{3/2}<\alpha<p^2$.  For $n\ge0$ let $c_n$
be the smallest integer such that $c_n\ge p^{4n}\alpha$
and $p\nmid c_n$.  Then $|c_n-(p^{4n}\alpha+1)|<1$.
Combining this inequality with the bounds on $\alpha$ we
get $c_n>p(p^{4n}+1)$ and $p^{4(n+1)}+1>pc_n$.  Hence by
Proposition~\ref{breaks} there is a totally ramified
$\Z_p$-extension $E_2'/\F_p((t))$ whose sequence of
upper ramification breaks is
\[p^0+1,c_0,p^4+1,c_1,p^8+1,c_2,p^{12}+1,c_3,\ldots.\]
Let $E_2=FE_2'$.  Then $E_2/F$ is a totally ramified
$\Z_p$-extension with the same upper ramification
sequence as $E_2'/\F_p((t))$.  The automorphism
$\phibold$ of $F$ stabilizes $E_2$ and induces the
identity on $\Gal(E_2/F)$.

     Viewing $E_1$ and $E_2$ as subfields of $F^{sep}$
we define $E=E_1E_2$.  For $i=1,2$ let $E_i^{(1)}/F$ be
the unique $(\Z/p\Z)$-subextension of the
$\Z_p$-extension $E_i/F$.  Then $E_1^{(1)}/F$ has
ramification break $p^4+1$ and $E_2^{(1)}/F$ has
ramification break 2.  Therefore
$E_1^{(1)}\cap E_2^{(1)}=F$ and $E_1^{(1)}E_2^{(1)}/F$
is a totally ramified $(\Z/p\Z)^2$-extension.  It
follows that $E_1\cap E_2=F$, and that $E/F$ is totally
ramified.  Set $\Gbold=\Gal(E/F)$,
$\Hbold_1=\Gal(E/E_2)$, and $\Hbold_2=\Gal(E/E_1)$.
Then $\Hbold_1\cap\Hbold_2=\{\id_E\}$ and
$\Hbold_1\Hbold_2=\Gbold$, so we get
\[\Gbold\cong\Hbold_1\times\Hbold_2\cong\Z_p\times\Z_p.\]

     Let $\sigmabold_1$ be a generator for the
$\Z_p$-module $\Hbold_1$ and let $\sigmabold_2$ be a
generator for the $\Z_p$-module $\Hbold_2$.  Since
$\phibold\in\Aut(F)$ can be extended to automorphisms of
both $E_1$ and $E_2$, there is an automorphism $\phitil$
of $E$ such that $\phitil|_F=\phibold$,
$\phitil(E_1)=E_1$ and $\phitil(E_2)=E_2$.  It follows
from the constructions of $E_1/F$ and $E_2/F$ that
\begin{align*}
(\phitil\circ\sigmabold_1\circ\phitil^{-1})|_{E_1}
&=\sigmabold_1^{-1}|_{E_1} \\
(\phitil\circ\sigmabold_2\circ\phitil^{-1})|_{E_2}
&=\sigmabold_2|_{E_1}.
\end{align*}
We also have
\begin{align*}
(\phitil\circ\sigmabold_1\circ\phitil^{-1})|_{E_2}
&=\id_{E_2}=\sigmabold_1^{-1}|_{E_2} \\
(\phitil\circ\sigmabold_2\circ\phitil^{-1})|_{E_1}
&=\id_{E_1}=\sigmabold_2|_{E_1}.
\end{align*}
It follows that
\begin{align*}
\sigmabold_1^{\phibold}
&=\phitil\circ\sigmabold_1\circ\phitil^{-1}
=\sigmabold_1^{-1} \\
\sigmabold_2^{\phibold}
&=\phitil\circ\sigmabold_2\circ\phitil^{-1}
=\sigmabold_2.
\end{align*}

     Let $\widetilde{v}_F$ denote the valuation on $E$
which extends $v_F$.  Then
\[(\widetilde{v}_F\circ\phitil)|_F=v_F\circ\phibold
=v_F.\]
Since $v_F$ extends uniquely to a valuation on $E$
we get $\widetilde{v}_F\circ\phitil=\widetilde{v}_F$.
It follows that for $\gammabold\in\Gbold$ we have
$i(\gammabold)=i(\gammabold^{\phibold})$.  Writing
$\gammabold=\sigmabold_1^a\sigmabold_2^b$ with
$a,b\in\Z_p$ we get $\gammabold^{\phibold}
=\sigmabold_1^{-a}\sigmabold_2^b$, and hence
\[i(\gammabold)\le i(\gammabold\gammabold^{\phibold})
=i(\sigmabold_2^{2b})=i(\sigmabold_2^b).\]
A similar argument gives $i(\gammabold)\le
i(\sigmabold_1^a)$.  Since we also have $i(\gammabold)\ge
\min\{i(\sigmabold_1^a),i(\sigmabold_2^b)\}$ we get
$i(\gammabold)=\min\{i(\sigmabold_1^a),i(\sigmabold_2^b)\}$.
Hence for $x\ge0$ we have
$\sigmabold_1^a\sigmabold_2^b\in\Gbold_x$ if and only if
$\sigmabold_1^a\in\Gbold_x$ and $\sigmabold_2^b\in \Gbold_x$.

     For $n\ge0$ set $a_n=u(\sigmabold_1^{p^n})$.  Then
$\sigmabold_1^{p^n}\in \Gbold^{a_n}$, so we have
$\sigmabold_1^{p^n}\Hbold_2\in\Gbold^{a_n}\Hbold_2/\Hbold_2$.
In addition, for $\epsilon>0$ we have
$\sigmabold_1^{p^n}\not\in\Gbold^{a_n+\epsilon}$.
Therefore $\sigmabold_1^{p^n}\sigmabold_2^b\not\in
\Gbold^{a_n+\epsilon}$ for all $b\in\Z_p$, so we have
$\sigmabold_1^{p^n}\Hbold_2\not\in
\Gbold^{a_n+\epsilon}\Hbold_2/\Hbold_2$.  Let
$\sigmabar_1=\sigmabold_1|_{E_1}$ denote the image of
$\sigmabold_1$ in $\Gal(E_1/F)\cong\Gbold/\Hbold_2$.
Then $\sigmabar_1^{p^n}\in(\Gbold/\Hbold_2)^{a_n}$ and
for $\epsilon>0$ we have
$\sigmabar_1^{p^n}\not\in(\Gbold/\Hbold_2)^{a_n+\epsilon}$.
Thus $a_n$ is the $n$th upper ramification
break of $E_1/F$, so we have
$u(\sigmabold_1^{p^n})=a_n=p^{4n+4}+1$ for $n\ge0$.  A
similar argument shows that $u(\sigmabold_2^{p^n})$ is
equal to the $n$th upper ramification break of $E_2/F$.
Therefore for $n\ge0$ we have
$u_{2n}(\sigmabold_2)=p^{4n}+1$ and
$u_{2n+1}(\sigmabold_2)=c_n$.  Combining these facts we
get
\begin{equation} \label{index}
|\Gbold:\Gbold^x|=\begin{cases}
1&(0\le x\le2) \\
p^{3i+1}&(p^{4i}+1<x\le c_i) \\
p^{3i+2}&(c_i<x\le p^{4i+4}+1).
\end{cases}
\end{equation}

     For sequences $(d_n)$ and $(e_n)$ we write
$d_n\sim e_n$ to indicate that $(d_n)$ is asymptotic
to $(e_n)$ as $n\ra\infty$.  Using (\ref{index}) we get
\begin{align} \nonumber
\psi_{E/F}(p^{4n}+1)
&=\int_0^{p^{4n}+1}|\Gbold:\Gbold^x|\,dx \\
&=2+\sum_{i=0}^{n-1}(p^{3i+1}(c_i-p^{4i}-1)
+p^{3i+2}(p^{4i+4}+1-c_i)) \nonumber \\
&=2+\sum_{i=0}^{n-1}(p^{7i+6}-p^{7i+1}+p^{3i+2}-p^{3i+1})
-\sum_{i=0}^{n-1}(p^{3i+2}-p^{3i+1})c_i \nonumber \\
&=2+(p^6-p)\frac{p^{7n}-1}{p^7-1}
+(p^2-p)\frac{p^{3n}-1}{p^3-1}
-(p^2-p)\sum_{i=0}^{n-1}p^{3i}c_i \nonumber \\
&\sim\frac{p^6-p}{p^7-1}p^{7n}
-(p^2-p)\sum_{i=0}^{n-1}p^{3i}\cdot p^{4i}\alpha
\nonumber \\
&=\frac{p^6-p}{p^7-1}p^{7n}
-(p^2-p)\frac{p^{7n}-1}{p^7-1}\alpha \nonumber \\
&\sim\frac{p^6-p-p^2\alpha+p\alpha}{p^7-1}p^{7n}.
\label{p4n} 
\end{align}
\begin{align}
\psi_{E/F}(c_n)&=\psi_{E/F}(p^{4n}+1)
+p^{3n+1}(c_n-p^{4n}-1) \nonumber \\
&\sim\frac{p^6-p-p^2\alpha+p\alpha}{p^7-1}p^{7n}
+p^{3n+1}(p^{4n}\alpha-p^{4n}) \nonumber \\
&=\frac{p^8\alpha-p^8+p^6-p^2\alpha}{p^7-1}p^{7n}
\nonumber \\
&=\frac{p^{9/2}\alpha-p^{9/2}+p^{5/2}-p^{-3/2}\alpha}{p^7-1}
(p^{7/2})^{2n+1}. \label{an}
\end{align}

     Let $(K,G)=\f(E/F)$ and let
$\sigma_1,\sigma_2\in G$ be the automorphisms of $K$
induced by $\sigmabold_1,\sigmabold_2\in\Gal(E/F)$.
Then the $\Z_p$-module $G\cong\Gbold\cong\Z_p\times\Z_p$
is generated by $\sigma_1$ and $\sigma_2$.  Since
\[i_n(\sigma_1)=i_n(\sigmabold_1)=\psi_{E/F}(u_n(\sigmabold_1))
=\psi_{E/F}(p^{4(n+1)}+1)\]
it follows from (\ref{p4n}) that $\Ht_2(\sigma_1)=7$.
Similarly, we have
\begin{align*}
i_{2n}(\sigma_2)&=i_{2n}(\sigmabold_2)=\psi_{E/F}(u_{2n}(\sigmabold_2))
=\psi_{E/F}(p^{4n}+1) \\
i_{2n+1}(\sigma_2)&=i_{2n+1}(\sigmabold_2)
=\psi_{E/F}(u_{2n+1}(\sigmabold_2))=\psi_{E/F}(c_n).  
\end{align*}
Since $\alpha$ was chosen to satisfy (\ref{alpha})
it follows from (\ref{p4n}) and (\ref{an}) that
$\Ht_2(\sigma_2)=7/2$.

     Let $\gamma\in G$.  Then
$\gamma=\sigma_1^a\sigma_2^b$ for some $a,b\in\Z_p$.
Therefore for $n\ge0$ we have $\gamma^{p^n}
=\sigma_1^{ap^n}\sigma_2^{bp^n}$, and hence
$i(\gamma^{p^n})=
\min\{i(\sigma_1^{ap^n}),i(\sigma_2^{bp^n})\}$.  Since
$\Ht_2(\sigma_1)=7$ and $\Ht_2(\sigma_2)=7/2$ there are
$L_1,L_2>0$ such that $i(\sigma_1^{p^n})\sim L_1p^{7n}$ and
$i(\sigma_2^{p^n})\sim L_2p^{\frac72n}$.  Hence if
$b\not=0$ then $i(\gamma^{p^n})\sim L_2'p^{\frac72n}$ with
$L_2'=L_2p^{\frac72v_p(b)}$, while if $a\not=0$ and $b=0$
then $i(\gamma^{p^n})\sim L_1'p^{7n}$ with
$L_1'=L_1p^{7v_p(a)}$.  Therefore $\Ht_2(\gamma)$ is
defined in all cases, and we have
\[\Ht_2(\sigma_1^a\sigma_2^b)=\begin{cases}
7/2&(b\not=0) \\
7&(a\not=0,\,b=0) \\
\infty&(a=b=0).
\end{cases}\]
It follows that
\[G[h]=\begin{cases}
G&(0<h\le7/2) \\
\sigma_1^{\Z_p}&(7/2<h\le7) \\
\{\id_K\}&h>7.
\end{cases}\]
\qed
\end{example}

     Let $\sigma$ be a wild automorphism of $K$ and let
$h\ge1$.  In Section~\ref{hts} we showed that if
$\Ht_1(\sigma)=h$ then $\Ht_2(\sigma)=h$, and if
$\Ht_2(\sigma)=h$ then $\Ht_3(\sigma)=h$.  In order to
give examples which show that the converses of these
implications do not hold, we describe a method for
producing wild automorphisms of $K$ with specified
ramification data.  Let $F$ be a local field of
characteristic $p$ with residue field $k$ and let
$(\nu_n)_{n\ge0}$ be a sequence such that
$\nu_0\in\N\smallsetminus p\N$ and
$\nu_n\in(\N\smallsetminus p\N)\cup\{0\}$ for $n\ge1$.
Define a sequence $(a_n)_{n\ge0}$ by $a_0=\nu_0$ and
$a_n=pa_{n-1}+\nu_n$ for $n\ge1$.  Then $p\nmid a_0$,
and for $n\ge0$ we have $a_{n+1}\ge pa_n$ with $p\nmid
a_{n+1}$ if $a_{n+1}\not=pa_n$.  Therefore by
Proposition~\ref{breaks} there is a totally ramified
$\Z_p$-extension $E/F$ whose upper ramification sequence
is $(a_n)_{n\ge0}$.

     Let $\f(E/F)\cong(K,G)$ and let $\sigma$ be a
generator for $G$.  Then $u_n^G(\sigma)=a_n
=\dst\sum_{j=0}^n p^{n-j}\nu_j$ for $n\ge0$.  In
addition, the lower ramification numbers of $\sigma$ are
given by
\begin{align}
i_n(\sigma)&=a_0+\sum_{h=1}^np^h(a_h-a_{h-1})
\nonumber \\
&=\sum_{h=0}^np^ha_h-\sum_{h=0}^{n-1}p^{h+1}a_h
\nonumber \\
&=\sum_{h=0}^n\sum_{j=0}^hp^{2h-j}\nu_j
-\sum_{h=0}^{n-1}\sum_{j=0}^hp^{2h-j+1}\nu_j
\nonumber \\
&=\sum_{j=0}^n\sum_{h=j}^np^{2h-j}\nu_j
-\sum_{j=0}^{n-1}\sum_{h=j}^{n-1}p^{2h-j+1}\nu_j
\nonumber \\
&=\sum_{j=0}^n\frac{p^{2n+2-j}-p^j}{p^2-1}\nu_j
-\sum_{j=0}^{n-1}\frac{p^{2n+1-j}-p^{j+1}}{p^2-1}
\nu_j \nonumber \\
&=\sum_{j=0}^n\frac{p^{2n+1-j}+p^j}{p+1}\nu_j.
\label{innu}
\end{align}
It follows that
\begin{align}
i_{n+1}(\sigma)-p^2i_n(\sigma)
&=p^{n+1}\nu_{n+1}+\sum_{j=0}^n
\frac{p^j-p^{j+2}}{p+1}\nu_j \nonumber \\
&=p^{n+1}\nu_{n+1}-(p-1)\sum_{j=0}^np^j\nu_j
\nonumber \\
\frac{i_{n+1}(\sigma)}{i_n(\sigma)}
&=p^2+\frac{1}{i_n(\sigma)}\left(p^{n+1}\nu_{n+1}
-(p-1)\sum_{j=0}^np^j\nu_j\right). \label{irat}
\end{align}

\begin{example} \label{3not2}
Let $\nu_0=a_0=b_0=1$, and for $n\ge1$ define
$\nu_n,a_n,b_n$ recursively by
\begin{align} \label{nun}
\nu_n&=\left\lfloor\frac{b_{n-1}}{np^n}
+(p-1)\sum_{j=0}^{n-1}p^{j-n}\nu_j\right\rfloor
+\gamma_n \\
a_n&=\sum_{j=0}^n p^{n-j}\nu_j \nonumber \\
b_n&=\sum_{j=0}^n\frac{p^{2n+1-j}+p^j}{p+1}\nu_j,
\label{bn}
\end{align}
where $\gamma_n\in\{0,1\}$ is chosen so that
$p\nmid\nu_n$.  Then the construction above gives
$\sigma\in G$ such that $u_n^G(\sigma)=a_n$ and
$i_n(\sigma)=b_n$ for $n\ge0$.  We claim that
$\Ht_3(\sigma)=2$ but $\Ht_2(\sigma)$ is undefined.
Using (\ref{nun}) we get
\[\nu_n=\frac{b_{n-1}}{np^n}
+(p-1)\sum_{j=0}^{n-1}p^{j-n}\nu_j+\epsilon_n,\]
with $|\epsilon_n|\le1$.  Therefore by (\ref{irat}) we
have
\begin{equation} \label{quot}
\frac{i_n(\sigma)}{i_{n-1}(\sigma)}=p^2+\frac{1}{n}
+\frac{p^n\epsilon_n}{b_{n-1}}.
\end{equation}
Since $\nu_0=1$ it follows from (\ref{bn}) that
$b_{n-1}\ge\dst\frac{p^{2n-1}+1}{p+1}$.  Therefore
$\dst\lim_{n\ra\infty}
\frac{p^n\epsilon_n}{b_{n-1}}=0$, so we get
$\dst\lim_{n\ra\infty}
\frac{i_n(\sigma)}{i_{n-1}(\sigma)}=p^2$.  Hence
$\Ht_3(\sigma)=2$.

     On the other hand, since $i_0(\sigma)=b_0=1$ it
follows from (\ref{quot}) that
\[\frac{i_N(\sigma)}{p^{2N}}=\prod_{n=1}^N
\left(1+\frac{1}{np^2}+\frac{p^{n-2}\epsilon_n}{b_{n-1}}
\right).\]
Since $b_{n-1}>p^{2n-1}/(p+1)$ and $\epsilon_n\ge-1$ we
get $p^{n-2}\epsilon_n/b_{n-1}>-(p+1)p^{-n-1}$.  Hence
\[\frac{i_N(\sigma)}{p^{2N}}>\prod_{n=1}^N
\left(1+\frac{1}{np^2}-(p+1)p^{-n-1}\right).\]
Since the product on the right diverges,
$\dst\lim_{N\ra\infty}\frac{i_N(\sigma)}{p^{2N}}$ is
undefined.  Hence $\Ht_2(\sigma)$ and $\Ht_1(\sigma)$
are undefined. \qed
\end{example}

\begin{example} \label{2not1}
For $n\ge0$ let $\nu_n=\dst\frac{p^{2n+1}+1}{p+1}+p$.
Then $\nu_n\in\N\smallsetminus p\N$, and the element
$\tau\in\Aut_k(K)$ associated to the sequence
$(\nu_n)_{n\ge0}$ satisfies
\begin{align*}
i_n(\tau)&=\sum_{j=0}^n\frac{p^{2n+1-j}+p^j}{p+1}
\left(\frac{p^{2j+1}+p^2+p+1}{p+1}\right) \\
&=\sum_{j=0}^n\frac{p^{2n+2+j}+p^{3j+1}
+(p^2+p+1)(p^{2n+1-j}+p^j)}{(p+1)^2} \\
&=\frac{1}{(p+1)^2}\left(\frac{p^{3n+3}-p^{2n+2}}{p-1}
+\frac{p^{3n+4}-p}{p^3-1}+(p^2+p+1)\frac{p^{2n+2}-1}{p-1}
\right) \\
&=\frac{1}{(p+1)^2}\left(\frac{p^{3n+3}-1}{p-1}
+p\cdot\frac{p^{3n+3}-1}{p^3-1}
+(p^2+p)\frac{p^{2n+2}-1}{p-1}\right) \\
&=\frac{1}{(p+1)^2}\left(
\frac{(p^2+2p+1)(p^{3n+3}-1)}{p^3-1}
+(p^2+p)\frac{p^{2n+2}-1}{p-1}\right) \\
&=\frac{p^{3n+3}-1}{p^3-1}+p\cdot
\frac{p^{2n+2}-1}{p^2-1}.
\end{align*}
It follows that $\dst\lim_{n\ra\infty}
\frac{i_n(\tau)}{p^{3n}}=\frac{p^3}{p^3-1}$, so
$\Ht_3(\tau)=\Ht_2(\tau)=3$.  In addition, we get
\begin{align*}
i_{n+1}(\tau)-i_n(\tau)&=
\frac{p^{3n+6}-p^{3n+3}}{p^3-1}+p\cdot
\frac{p^{2n+4}-p^{2n+2}}{p^2-1} \\
&=p^{3n+3}+p^{2n+3} \\[.2cm]
\frac{i_{n+1}(\tau)-i_n(\tau)}
{i_n(\tau)-i_{n-1}(\tau)}
&=\frac{p^{3n+3}+p^{2n+3}}{p^{3n}+p^{2n+1}} \\
&=\frac{p^n+1}{p^{n-1}+1}\cdot p^2.
\end{align*}
Therefore $\Ht_1(\tau)$ is undefined.
\end{example}

\end{document}